\theoremstyle{plain}
\newtheorem{theorem}{Theorem}[section]
\newtheorem{definition}[theorem]{Definition}
\newtheorem{example}[theorem]{Example}
\newtheorem{lemma}[theorem]{Lemma}
\newtheorem{notation}[theorem]{Notation}
\newtheorem{proposition}[theorem]{Proposition}
\newtheorem{remark}[theorem]{Remark}
\numberwithin{equation}{section}
\numberwithin{table}{section}
\begin{document}
\title[Assouad dimensions of measures]{Measures with specified support and
arbitrary Assouad dimensions }
\author{Kathryn E. Hare}
\address{Dept. of Pure Mathematics, University of Waterloo, Waterloo, Ont.,
Canada, N2L 3G1}
\email{kehare@uwaterloo.ca}
\author{Franklin Mendivil}
\address{Department of Mathematics and Statistics, Acadia University,
Wolfville, N.S. Canada, B4P 2R6}
\email{franklin.mendivil@acadiau.ca}
\author{Leandro Zuberman}
\address{Centro Marplatense de Investigaciones Matem\'{a}ticas, \\
Universidad Nacional de Mar del Plata, Argentina and CONICET}
\email{leandro.zuberman@gmail.com}
\thanks{The research of K. Hare is partially supported by NSERC 2016:03719.
The research of F. Mendivil is partially supported by NSERC\ 2012:238549. K.
Hare and L. Zuberman thank Acadia University for their hospitality when some
of this research was done.}
\subjclass[2010]{Primary: 28A78; Secondary 28A80}
\keywords{upper and lower Assouad dimension, upper and lower regularity}
\thanks{This paper is in final form and no version of it will be submitted
for publication elsewhere.}

\begin{abstract}
We show that if the upper Assouad dimension of the compact set $E\subseteq 
\mathbb{R}$ is positive, then given any $D>\dim _{A}E$ there is a measure
with support $E$ and upper Assouad (or regularity) dimension $D$. Similarly,
given any $0\leq d<\dim _{L}E,$ there is a measure on $E$ with lower Assouad
dimension $d$.
\end{abstract}

\maketitle

\section{Introduction}

The upper and lower Assouad dimensions of measures are analogues of the
Assouad dimensions of metric spaces and provide quantitative information
about the extreme local behaviour of the measures. These dimensions were
extensively studied by Fraser and Howroyd in \cite{FH} and K\"{a}enm\"{a}ki
et al.\ in \cite{KL} and \cite{KLV}, where they were called the upper and
lower regularity dimensions. It is known that the upper Assouad dimension of
a measure $\mu ,$ $\dim _{A}\mu ,$ is finite if and only if the measure is
doubling and that the upper Assouad dimension of $\mu $ always dominates the
upper Assouad dimension of the support of $\mu $. Similarly, a measure $\mu $
has positive lower Assouad dimension, $\dim _{L}\mu $, if and only if $\mu $
is uniformly perfect and the lower Assouad dimension of a doubling measure
is always dominated by the lower Assouad dimension of its support.

In \cite{VK}, Volberg and Konyagin proved that any compact, doubling, metric
space $E$ supports a measure whose upper Assouad dimension is arbitrarily
close to that of the space $E$. This was extended to complete doubling
spaces in \cite{LS}. The analogue for lower Assouad dimension was proved by
Byland and Gudayol in \cite{BG} and K\"{a}enm\"{a}ki and Lehrb\"{a}ck in 
\cite{KL}. Volberg and Konyagin also showed that there is a complete,
doubling, metric space $E$ with the property that every measure supported on 
$E$ has upper Assouad dimension strictly greater than that of $E$.

Fraser and Howroyd in \cite{FH} proved that if $E$ is the closure of the
discrete set $\{q^{n}:n=1,2,...\}\subseteq \mathbb{R}$ where $q<1,$ then the
measure $\mu =\sum p^{n}\delta _{q^{n}}$, where $p<1,$ has upper Assouad
dimension $\log p/\log q$. Thus there are measures $\mu $ with support $E$
and $\dim _{A}\mu $ equal to any given value greater than $\dim _{A}E$
(which is zero, in this case). It is natural to ask if this property holds
more generally. The purpose of this note is to show that it does hold for
any compact subset of $\mathbb{R}$ with positive upper Assouad dimension.
More specifically, if $\dim _{A}E>0,$ then given any $D>\dim _{A}E$
(including $D=\infty )$ there is a measure $\mu $ with support $E$ and $\dim
_{A}\mu =D$. A key ingredient in our argument is the generalised nested
cubes construction of K\"{a}enm\"{a}ki et al in \cite{KRS}.

Similarly, given any $0\leq d<\dim _{L}E$ there is a measure $\nu $ with
support $E$ and $\dim _{L}\nu =d$. If $d>0,$ we can achieve both specified
dimensions with a single measure. We also show that these results need not
be true if we only assume $E$ is a compact, infinite subset of $\mathbb{R}$.

\section{Definitions}

Given $E$, a compact subset of a metric space, we write $N_{r}(E)$ for the
least number of sets of diameter at most $r$ that are required to cover $E$.

\begin{definition}
The upper and lower Assouad dimensions of $E,$ denoted $\dim _{A}E$ and $%
\dim _{L}E$ respectively, are given by 
\begin{equation*}
\dim _{A}E=\inf \left\{ \alpha :(\exists C_{1},C_{2}>0)(\forall
0<r<R<C_{1})\ \sup_{x\in E}N_{r}(B(x,R)\cap E)\leq C_{2}\left( \frac{R}{r}%
\right) ^{\alpha }\right\}
\end{equation*}%
and%
\begin{equation*}
\dim _{L}E=\sup \left\{ \alpha :(\exists C_{1},C_{2}>0)(\forall
0<r<R<C_{1})\ \inf_{x\in E}N_{r}(B(x,R)\cap E)\geq C_{2}\left( \frac{R}{r}%
\right) ^{\alpha }\right\}
\end{equation*}
\end{definition}

One can similarly define the Assouad dimensions of any finite, positive
Borel measure defined on the metric space.

\begin{definition}
The upper and lower Assouad dimensions of measure $\mu ,$ denoted $\dim
_{A}\mu $ and $\dim _{L}\mu $ respectively, are given by 
\begin{equation*}
\dim _{A}\mu =\inf \left\{ \alpha :(\exists C_{1},C_{2}>0)(\forall 0<r\leq
R\leq C_{1})\sup_{x\in supp\mu }\frac{\mu (B(x,R))}{\mu (B(x,r))}\leq
C_{2}\left( \frac{R}{r}\right) ^{\alpha }\right\}
\end{equation*}%
and 
\begin{equation*}
\dim _{L}\mu =\sup \left\{ \alpha :(\exists C_{1},C_{2}>0)(\forall
0<r<R<C_{1})\inf_{x\in supp\mu }\frac{\mu (B(x,R))}{\mu (B(x,r))}\geq
C_{2}\left( \frac{R}{r}\right) ^{\alpha }\right\}
\end{equation*}
\end{definition}

The upper and lower Assouad dimensions of a measure have also been referred
to as the upper and lower regularity dimensions.

The Assouad dimensions quantify the extreme local behaviour of the set or
measure. Indeed, the following relationships are known for all compact
metric spaces $E$ and measures $\mu $:%
\begin{equation*}
\dim _{L}E\leq \dim _{H}E\leq \underline{\dim }_{B}E\leq \overline{\dim }%
_{B}E\leq \dim _{A}E
\end{equation*}%
and 
\begin{equation*}
\dim _{L}\mu \leq \dim _{H}\mu \leq \dim _{A}\mu \text{,}
\end{equation*}%
where $\dim _{H}E,$ \underline{$\dim $}$_{B}E$ and $\overline{\dim }_{B}E$
denote the Hausdorff, lower and upper box dimensions of $E$ respectively and 
$\dim _{H}\mu $ is the Hausdorff dimension of $\mu $. The upper Assouad
dimension of a subset of $\mathbb{R}^{n}$ is at most $n$. More generally,
the upper Assouad dimension of $E$ is finite if and only if $E$ is a
doubling metric space. Similarly, the upper Assouad dimension of measure $%
\mu $ is finite if and only if the measure is doubling, meaning there is a
constant $C$ such that 
\begin{equation*}
\mu (B(x,R))\leq C\mu (B(x,R/2))
\end{equation*}%
whenever $x$ belongs to the support of $\mu $ and $R>0$. Unlike the case for
sets, the upper Assouad dimension of a measure on $\mathbb{R}^{n}$ can be
infinite. Dually, the lower Assouad dimension of a set\ (or measure) is
positive if and only if the set (resp., measure) is uniformly perfect, which
for measures means there are constants $C>1,\tau >1$ such that%
\begin{equation*}
\mu (B(x,\tau R))\geq C\mu (B(x,R))
\end{equation*}%
for all $x\in $supp$\mu $ and $0<R\leq diam($supp$\mu )/(2\tau )$. It is not
difficult to see that for any measure $\mu $, 
\begin{equation*}
\dim _{A}\text{supp}\mu \leq \dim _{A}\mu ,
\end{equation*}%
and if $\mu $ is a doubling measure, then%
\begin{equation*}
\dim _{L}\mu \leq \dim _{L}\text{supp}\mu .
\end{equation*}%
For more background on these dimensions and proofs of these basic facts we
refer the reader to \cite{FTrans}-\cite{KLV}.

As the upper Assouad dimension of a union of two sets is the maximum of
their upper Assouad dimensions, one might expect that the upper Assouad
dimension of the sum of two measures is their maximum upper Assouad
dimension. This is not true as the following example illustrates.

\begin{example}
Measures $\mu ,\nu $ with $\dim _{A}(\mu +\nu )<\max (\dim _{A}\mu ,\dim
_{A}\nu )$: The two measures will be supported on the classical middle third
Cantor set. Pick $0<p<1/2$. We will define the measures by specifying them
on the Cantor intervals of the standard Cantor set construction. We label
these intervals as $I_{w}$ where $w$ is a finite word on the letters $0,1$,
where $I_{w0}$ is the left child of interval $I_{w}$ and $I_{w1}$ is the
right child.

Let $w$ be such a word. If $w=0^{(n)}$, then define $\mu (I_{w})=p^{n}$. If $%
w=0^{(n)}1\sigma $ where $n\geq 0$ and $\sigma $ is a word of length $%
\left\vert \sigma \right\vert =k$ on the letters $0,1,$ then put $\mu
(I_{w})=p^{n}(1-p)2^{-k}$. The measure $\nu $ will be defined similarly, but
with the roles of $0,1$ interchanged. It is easy to see that $\mu ,\nu $ are
probability measures.

As $p<1/2,$ if $w=0\sigma ,$ then $\mu (I_{w})\leq (1-p)2^{-|\sigma |}$,
while $\nu (I_{w})=(1-p)2^{-|\sigma |}$. Consequently, $(\mu +\nu
)(I_{w})=O(2^{-|w|})$. Similarly, if $w=1\sigma ,$ then $\nu (I_{w})\leq
(1-p)2^{-|\sigma |}$, while $\mu (I_{w})=(1-p)2^{-|\sigma |}$, so again, $%
(\mu +\nu )(I_{w})=O(2^{-|w|})$. It follows easily from this that $\dim
_{A}(\mu +\nu )=\log 2/\log 3$.

But, 
\begin{equation*}
\frac{\mu (I_{0^{(n)}})}{\mu (I_{0^{(n+k)}})}=p^{-k}\text{ while }\frac{%
|I_{0^{(n)}}|}{|I_{0^{(n+k)}}|}=3^{k},
\end{equation*}%
thus $\dim _{A}\mu \geq |\log p|/\log 3>\log 2/\log 3$. (In fact, we have
equality.) A similar statement holds for $\nu $.
\end{example}

\section{Upper Assouad Dimension}

We begin by reviewing the\ construction introduced by K\"{a}enm\"{a}ki,
Rajala and Suomala in \cite{KRS}, which we will call the KRS\ construction.
This construction will be used in the remainder of the paper to construct
measures which have the required upper and/or lower Assouad dimensions. We
remark that we have made slight changes to some of their notation and we
will also introduce further terminology that will be helpful for our
application.

\textbf{KRS Construction:} By the KRS\ construction of a set $E\subseteq
\lbrack 0,1]$, we will mean the nested, dyadic-like `cubes' construction
described in \cite{KRS}. This construction was shown to hold for any metric
space which is doubling. In the case of subsets of $\mathbb{R}$, the
generalized cubes of their construction can be taken to be intervals.

To be more specific, K\"{a}enm\"{a}ki et al have shown that for any $0<s<1/3$
and for each $k\in \mathbb{N}$ there are disjoint intervals, $I_{w},$
labelled by words $w$ of length $\left\vert w\right\vert =$ $k,$ with 
\begin{equation*}
\bigcup_{w:\left\vert w\right\vert =k}I_{w}\bigcap E=E
\end{equation*}%
and which have the properties listed below:

Properties. (i) There are constants $c,C>0$ such that if $\left\vert
w\right\vert =$ $k$ (we call $k$ the \textbf{level }of $I_{w}$), then 
\begin{equation*}
2cs^{k}\leq \ell (I_{w})\leq 2Cs^{k},
\end{equation*}%
where $\ell (I_{w})$ denotes the length of the interval. By choosing $s$
sufficiently small, we can assume $c=3/8$ and $C=9/8$, so $C=3c$. We will
write 
\begin{equation*}
\text{KRS}(s,c,C)
\end{equation*}%
if we want to emphasize the particular choice of parameters $s,c,C$.

(ii) Each $I_{w}$ contains a \textbf{distinguished point,} $x_{w},$ such
that $d(x_{w},I_{w}^{c})\geq cs^{k}$ if $\left\vert w\right\vert =k.$

(iii) Each $I_{w}$ has at least one child. Moreover, $I_{w}$ decomposes into
its children, $I_{wj},$ $j=1,...,N_{w},$ where $N_{w}$ $\leq N=N(s,E)$. By
this, we mean that $I_{wj}\subseteq I_{w}$, $I_{wi}\bigcap I_{wj}=\phi $ if $%
i\neq j$ and 
\begin{equation*}
I_{w}\cap E=\bigcup_{j=1}^{N_{w}}(I_{wj}\cap E).
\end{equation*}

We will call $I_{wj}$ a \textbf{boundary child} (or \textbf{boundary interval%
}) of the interval $I_{w}$ of level $k$ if $d(I_{wj},I_{w}^{c})<cs^{k+1}$.
As $\ell (I_{wj})\geq 2cs^{k+1},$ there can be at most two boundary children
for each $I_{w},$ one on each side of $I_{w}$. Any child of $I_{w}$ that is
not a boundary child will be called an \textbf{interior child} (or \textbf{%
interior interval}). Two boundary intervals that do not have the same parent
(that is, they are not \textbf{siblings}) will be called \textbf{cousins}.
We will write $I_{w^{-}}$ for the \textbf{parent} of $I_{w}$.

We will say that two level $k$ intervals, $I_{w}$ and $I_{v},$ are \textbf{%
adjacent} if the distance between them is at most $cs^{k}$. This ensures
there is no interval of level $k$ between them and consequently the open
interval in-between contains no points of $E$. If adjacent intervals are not
siblings, they must both be boundary intervals of (different) parents that
are adjacent to each other. Any interval can be adjacent to at most two
other intervals (one on each side). At most one can fail to be a sibling.

Notice that the distinguished point, $x_{w},$ does not belong to a boundary
child of $I_{w}$. Consequently every $I_{w}$ has a child that is an interior
interval. The interior child of $I_{w}$ containing $x_{w}$ will be called
the \textbf{distinguished} \textbf{interior child }/ \textbf{interval}.

We will say $I_{w}$ \textbf{splits} if it has at least two children. Notice
that if $I_{w}$ has a boundary child it must split.

By a \textbf{path} of length $L$, we will mean a sequence of nested
intervals, 
\begin{equation*}
I_{w},I_{wj_{1}},I_{wj_{1}j_{2},}...,I_{wu},
\end{equation*}%
where $\left\vert u\right\vert =L$. Here each interval in the sequence is a
child of the previous interval. We will call it a\textbf{\ boundary path} if
all the descendents of $I_{w}$ are boundary intervals. A boundary path is a 
\textbf{splitting path} (meaning each interval, other than possibly the
last, splits) because $I_{w}$ and each $I_{wj_{1}...j_{k}}$ (except possibly 
$I_{wu})$ has a boundary child and hence at least two children.

\begin{lemma}
\label{Lem:s} Let $E\subseteq \lbrack 0,1]$ and $\varepsilon >0$. There is a
choice of $s(\varepsilon )<1/3$ and integer $k(\varepsilon )$ such that the
following holds: Take any KRS$(s,c,3c)$ construction of $E$ with $s\leq
s(\varepsilon )$. If $N_{w}$ is the number of children of interval $I_{w}$
and $\left\vert w\right\vert \geq k(\varepsilon ),$ then 
\begin{equation*}
s^{-(\dim _{L}E-\varepsilon )}\leq N_{w}\leq s^{-(\dim _{A}E+\varepsilon )}.
\end{equation*}
\end{lemma}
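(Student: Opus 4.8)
The plan is to relate the number of children $N_w$ of an interval $I_w$ at level $k=|w|$ to the covering numbers $N_r(B(x,R)\cap E)$ that appear in the definitions of $\dim_A E$ and $\dim_L E$, exploiting that the KRS intervals have comparable size ($2cs^k\le\ell(I_w)\le 2Cs^k$ with $C=3c$) and that they decompose $I_w\cap E$ disjointly. The children of $I_w$ are intervals of level $k+1$, so each has length at least $2cs^{k+1}$; hence to cover $I_w\cap E$ one needs at least one set of diameter $<2cs^{k+1}$ per nonempty child (since distinct children are separated by gaps containing no points of $E$, more precisely adjacent level-$(k+1)$ intervals are at distance at most $cs^{k+1}$, but that still forces a new covering set when the diameter is small enough). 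Conversely, each child sits inside $I_w$ which has diameter at most $2Cs^k=6cs^k$, so $I_w\cap E\subseteq B(x_w,R)$ for a suitable $R$ of order $s^k$.

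First I would fix $x=x_w$, the distinguished point of $I_w$, and choose $R\asymp s^k$ large enough that $B(x,R)\supseteq I_w$ and $r\asymp s^{k+1}$ small enough that any cover of $I_w\cap E$ by sets of diameter $r$ uses a distinct set for each of the $N_w$ children; this gives
\begin{equation*}
c_1 N_w \le N_r(B(x,R)\cap E)\le c_2 N_w
\end{equation*}
for absolute constants $c_1,c_2$ depending only on $c,C$, with $R/r\asymp s^{-1}$. Then I would feed this into the definitions. For the upper bound: pick $\alpha=\dim_A E+\varepsilon/2$; by definition there are $C_1,C_2$ with $\sup_{x\in E}N_r(B(x,R)\cap E)\le C_2(R/r)^\alpha$ for all $0<r<R<C_1$. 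Applying this with our $R,r$ (which requires $k$ large enough, depending on $\varepsilon$ through $C_1$ and on $s$), we get $N_w\le c_1^{-1}C_2(R/r)^\alpha\le C' s^{-\alpha}$. To absorb the constant $C'$ and the bounded ratio $R/(r s^{-1})$ into an extra $s^{-\varepsilon/2}$, I choose $s(\varepsilon)$ small enough that $C' s^{-\varepsilon/2}\le 1$, say, so that $N_w\le s^{-\alpha-\varepsilon/2}=s^{-(\dim_A E+\varepsilon)}$; this is where both the choice of $s(\varepsilon)$ and of $k(\varepsilon)$ get pinned down. The lower bound $N_w\ge s^{-(\dim_L E-\varepsilon)}$ is entirely symmetric, using the $\sup$ in the definition of $\dim_L E$ with $\alpha=\dim_L E-\varepsilon/2$ and the lower inequality $N_w\ge c_2^{-1}N_r(B(x,R)\cap E)$.

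The main obstacle is making the comparison between $N_w$ and $N_r(B(x_w,R)\cap E)$ clean and uniform, in particular verifying that a cover of $I_w\cap E$ by sets of diameter roughly $s^{k+1}$ genuinely separates the children. This needs the geometric facts recorded in the construction: children of $I_w$ have length $\ge 2cs^{k+1}$, and two level-$(k+1)$ intervals that are not adjacent are separated by a definite gap, so choosing $r$ a small enough multiple of $s^{k+1}$ (smaller than $c s^{k+1}$, say) forces each child to meet a cover element that meets no other child, giving $N_r\ge N_w$; one must also check the contribution of at most one pair of adjacent non-sibling boundary intervals does not spoil this, but adjacency within a single $I_w$ only concerns siblings, so inside $B(x_w,R)$ restricted to $I_w$ there is no issue. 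The upper comparison $N_r(B(x_w,R)\cap E)\le N_w$ (up to a constant) is easier: a single set of diameter $r\asymp s^{k+1}$ meets at most a bounded number of level-$(k+1)$ intervals (by the doubling/bounded-overlap nature of the construction, since each such interval has length $\ge 2cs^{k+1}$), so one can build a cover of $B(x_w,R)\cap E$ of size $O(N_w)$ by covering each child by one set, and hence $N_r\le c_2 N_w$. Once these comparisons are in hand, the rest is the bookkeeping of choosing $s(\varepsilon)$ and $k(\varepsilon)$ described above.
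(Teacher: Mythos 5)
Your proposal follows the same route as the paper: compare $N_w$ with covering numbers of balls centred at the distinguished point $x_w$ at scales $R\asymp s^{k}$ and $r\asymp s^{k+1}$ (where $k=|w|$), then invoke the definitions of $\dim_A E$ and $\dim_L E$ with exponent shifted by $\varepsilon/2$ and absorb the multiplicative constants by taking $s(\varepsilon)$ small and $k(\varepsilon)$ large (note the absorption condition should read $C's^{\varepsilon/2}\le 1$, not $C's^{-\varepsilon/2}\le 1$; the intent is clear since $s^{-\varepsilon/2}\to\infty$). The lower comparison $N_r(B(x_w,R)\cap E)\ge N_w$ is sound: the distinguished points of distinct children are separated by at least $2cs^{k+1}$, so a cover by sets of diameter below that threshold needs one set per child. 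The one step that does not work as stated is the upper comparison $N_r(B(x_w,R)\cap E)\le c_2N_w$ for a ball with $B(x_w,R)\supseteq I_w$: such a ball also meets neighbouring level-$k$ intervals, and their numbers of children need not be comparable to $N_w$ with an absolute constant --- that uniformity is essentially what the lemma is trying to establish --- so the extra cover sets those neighbours require cannot be hidden in $c_2$. The repair is exactly the paper's choice of scales for this direction: use the smaller ball $B(x_w,cs^{k})\subseteq I_w$ and the coarser mesh $r=2Cs^{k+1}$, so that the $N_w$ children themselves (each of diameter at most $2Cs^{k+1}$) form an admissible cover and $N_{2Cs^{k+1}}(B(x_w,cs^{k})\cap E)\le N_w$ outright, while $R/r$ is still comparable to $s^{-1}$; with that substitution the rest of your bookkeeping goes through unchanged.
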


\begin{proof}
From the definition of the upper Assouad dimension we can find constants $A$
and $\rho >0,$ depending on $\varepsilon ,$ such that for all $r\leq R\leq
\rho $ and $x\in E,$ we have 
\begin{equation}
N_{r}(B(x,R))\leq A\left( \frac{R}{r}\right) ^{\dim _{A}E+\varepsilon /2}%
\text{ .}  \label{A1}
\end{equation}%
Choose $s(\varepsilon )$ so small that $A(3s^{-1})^{\dim _{A}E+\varepsilon
/2}\leq s^{-(dim_{A}E+\varepsilon )}$ for all $s\leq s(\varepsilon )$.
Choose $k(\varepsilon )$ such that $2Cs(\varepsilon )^{k}<\rho $ for all $%
k\geq k(\varepsilon )$ where $C=3c$.

Take any interval $I_{w}$ from a KRS$(s,c,3c)$\ construction of $E$ where $%
\left\vert w\right\vert \geq k\geq k(\varepsilon )$. Let $x_{w}$ be the
distinguished point of $I_{w}.$ Consider $B(x_{w},2Cs^{k})\supseteq I_{w}$.
As the children of $I_{w}$ have length at least $2cs^{k+1},$ if $r=2cs^{k+1}$
we clearly have%
\begin{equation*}
N_{r}(B(x_{w},2Cs^{k}))\geq N_{w}.
\end{equation*}%
But from (\ref{A1}), we also have 
\begin{eqnarray*}
N_{r}(B(x_{w},2Cs^{k})) &\leq &A\left( \frac{2Cs^{k}}{2cs^{k+1}}\right)
^{\dim _{A}E+\varepsilon /2} \\
&=&A(3s^{-1})^{\dim _{A}E+\varepsilon /2}\leq s^{-(dim_{A}E+\varepsilon )}.
\end{eqnarray*}%
Consequently,%
\begin{equation*}
N_{w}\leq s^{-(\dim _{A}E+\varepsilon )}.
\end{equation*}

Similarly, choose positive constants $A^{\prime },\rho ^{\prime },$
depending on $\varepsilon ,$ such that 
\begin{equation*}
N_{r}(B(x,R))\geq A^{\prime }\left( \frac{R}{r}\right) ^{\dim
_{L}E-\varepsilon /2}
\end{equation*}
for all $r\leq R\leq \rho ^{\prime }$. As $B(x_{w},cs^{k})\subseteq I_{w},$
we have 
\begin{equation*}
N_{w}\geq N_{2Cs^{k+1}}(B(x_{w},cs^{k}))\geq A^{\prime }\left( \frac{cs^{k}}{%
2Cs^{k+1}}\right) ^{\dim _{L}E-\varepsilon /2}\geq s^{-(\dim
_{L}E-\varepsilon )}\text{ }
\end{equation*}%
for large enough $k,$ provided we choose $s\leq s(\varepsilon )$ and $%
s(\varepsilon )>0$ sufficiently small.
\end{proof}

The main goal of this section is to prove the following theorem.

\begin{theorem}
\label{upper}Let $E\subseteq \lbrack 0,1]$ be compact. If $\dim _{A}E>0,$
then given any $D>\dim _{A}E,$ there is a probability measure $\mu $ with
support $E$ and $\dim _{A}\mu =D$.
\end{theorem}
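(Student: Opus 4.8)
The plan is to build $\mu$ on $E$ using the KRS construction, distributing mass among children so that most of the time $\mu$ behaves like a measure of Assouad dimension $\dim_A E$, but with a ``thin'' channel of intervals along which the ratio $\mu(I_{w^-})/\mu(I_w)$ is inflated, forcing the upper Assouad dimension up to exactly $D$. First I would fix a small $\varepsilon>0$ and apply Lemma~\ref{Lem:s} to get $s=s(\varepsilon)<1/3$ and $k(\varepsilon)$ so that every interval of level $\geq k(\varepsilon)$ has between $s^{-(\dim_L E-\varepsilon)}$ and $s^{-(\dim_A E+\varepsilon)}$ children. Along a fixed infinite splitting path (for instance, always following a distinguished interior child, which exists by Property~(ii)) I would allocate an unusually small fraction of the parent's mass to the chosen child, say a fraction comparable to $s^{t|w|}$ cumulatively where $t$ is tuned so that $\log(\text{mass ratio})/\log(\text{scale ratio})\to D$, and split the remaining mass among the other children in rough proportion to an $(\dim_A E+\varepsilon)$-regular pattern. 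Concretely, if $D<\infty$ one wants the ``bad'' child at level $k+1$ to carry mass roughly $s^{kD}$-ish relative to scale $s^k$, i.e. locally $\mu(\text{child})/\mu(\text{parent})\sim s^{D}$ on the channel while off the channel the ratios stay near $s^{\dim_A E}$ up to bounded factors; for $D=\infty$ one instead lets the channel ratios tend to $0$ faster than any power of $s$.

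Next I would prove the upper bound $\dim_A\mu\le D$. The key point is that a ball $B(x,R)$ meets only boundedly many level-$k$ intervals (where $s^k\approx R$), by the bounded-overlap/adjacency structure in the KRS construction (any interval is adjacent to at most two others), and similarly $B(x,r)$ with $s^m\approx r$ contains a level-$m$ interval around any $x\in\mathrm{supp}\,\mu$ when $x$ is suitably central, or is handled by passing to the distinguished interior child. So $\mu(B(x,R))/\mu(B(x,r))$ is comparable to a product of consecutive child/parent ratios along a chain of intervals from level $k$ to level $m$; each factor is at most $C s^{-(\dim_A E+\varepsilon)}$ off the channel and the worst accumulation happens when the chain runs along the channel, where the telescoping product is controlled by the construction to grow like $(R/r)^{D}$ up to a constant. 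Taking $\varepsilon\to 0$ (or rather, choosing $\varepsilon$ small at the outset so $\dim_A E+\varepsilon<D$) gives $\dim_A\mu\le D$. For the lower bound $\dim_A\mu\ge D$, I would exhibit for each scale ratio $\lambda$ a specific pair $R,r$ with $R/r=\lambda$ realizing the ratio: take $x=x_w$ the distinguished point of a deep interval $I_w$ on the channel, $R\approx s^{|w|}$ so $B(x_w,R)\supseteq I_w$, and $r\approx s^{|w|+n}$ for large $n$, using that $d(x_w,I_w^c)\ge cs^{|w|}$ so $B(x_w,R')\subseteq I_w$ for comparable $R'$; then $\mu(B(x_w,R))/\mu(B(x_w,r))$ is at least the channel product over $n$ levels, which is $\gtrsim (R/r)^{D-\varepsilon}$, and letting $n\to\infty$ forces $\dim_A\mu\ge D-\varepsilon$ for every $\varepsilon$, hence $\ge D$. (The $D=\infty$ case: the channel ratios are unbounded, so the supremum over $n$ of the exponent is $+\infty$.)

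The main obstacle I expect is bookkeeping the off-channel behaviour so that it genuinely does not exceed $D$: one must verify that even when a ball straddles several siblings (including boundary children and their cousins in adjacent parents), the sum of their masses is comparable to the mass of a single containing interval a bounded number of levels up, so that no ``extra'' blow-up sneaks in from the combinatorics of how $E$ sits inside $[0,1]$. This is exactly where Properties~(i)--(iii), the two-sided length bounds $2cs^k\le\ell(I_w)\le 2Cs^k$ with $C=3c$, the at-most-two-boundary-children fact, and the adjacency bound are used: they guarantee that ``$B(x,R)$ is covered by $O(1)$ level-$k$ intervals'' and ``every point of $\mathrm{supp}\,\mu$ is $cs^k$-deep inside some level-$k$ interval after passing to a distinguished interior child,'' which together reduce the general ratio estimate to the one-dimensional chain estimate along nested intervals that the mass assignment was designed to control. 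A secondary technical point is ensuring $\mathrm{supp}\,\mu=E$ exactly: since every $I_w$ has a child and we give every child strictly positive mass (the channel child included, just a smaller share), the closure of the union of supports is all of $E$.
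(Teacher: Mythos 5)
Your overall strategy (a KRS-based mass distribution with a deflated ``channel'' of intervals) matches the paper's in spirit, but there is a central gap: you never confront the possibility that intervals along your channel are \emph{only children}. If $I_{wj}$ is the unique child of $I_w$, the weights of the children of $I_w$ must sum to $1$, so $I_{wj}$ is forced to receive weight $1$ and no deflation is possible at that level. Since the theorem assumes only $\dim_A E>0$ (and $\dim_L E$ may be $0$), a fixed path of distinguished interior children can split on a vanishingly small proportion of its levels; the cumulative channel ratio over $n$ levels is then $s^{-D\cdot\#\{\mathrm{splits}\}}$, which can be far smaller than $(R/r)^{D}$, and your lower bound $\dim_A\mu\ge D$ fails. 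This is exactly where the hypothesis $\dim_A E>0$ must enter, and your argument never uses it. The paper's solution is the \emph{proportionality constant} $\zeta$ of Lemma \ref{proportional}: positivity of $\dim_A E$ forces the existence of paths splitting on a proportion $\approx\zeta>0$ of their levels; one assigns the deflated weight $a=s^{D/\zeta}$ (not $s^{D}$) only at the splitting levels of such carefully selected paths; and the matching upper bound uses that \emph{no} deep path splits on a proportion much larger than $\zeta$, so the per-level overshoot $s^{-D/\zeta}$ averages out to exponent $D$. None of this machinery appears in your proposal.

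A second gap is the adjacency issue, which you correctly flag as ``the main obstacle'' but do not resolve. The KRS geometry alone does not make $\mu(I_\sigma)$ comparable to $\mu(I_v)$ for adjacent non-sibling intervals of the same level: that comparability depends on the weight assignment. With your scheme (off-channel weights determined by the number of siblings), the ratio of the measures of adjacent cousins is a product of weight ratios over the two boundary paths descending from their common ancestor; if $E$ admits arbitrarily long boundary paths this product is unbounded, and the resulting measure need not even be doubling, destroying the upper bound. The paper splits into two cases accordingly: when arbitrarily long boundary paths exist (Lemma \ref{longbdy}) it assigns \emph{equal} weights to adjacent cousins along paired boundary paths (and uses those boundary paths themselves as the channel, exploiting the fact that an interval with a boundary child necessarily splits); otherwise all boundary paths have length at most some $M$, so adjacent cousins have a common ancestor at most $M+1$ generations up and comparability holds automatically. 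A minor further point: in your lower-bound estimate the small ball should be centred at the distinguished point of the \emph{deepest} interval of the channel segment, not at $x_w$ for the top interval $I_w$, since $x_w$ need not lie $cs^{|w|+n}$-deep inside the level-$(|w|+n)$ descendant.
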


Our basic strategy is to use the KRS\ construction to define a probability
measure $\mu $, with support equal to $E,$ by specifying $\mu $ on each $%
I_{w}$ in a consistent fashion. Indeed, we will assign positive weights, $%
\Delta (I_{wj}),$ so that $\sum_{j=1}^{N_{w}}\Delta (I_{wj})=1$ and define 
\begin{equation*}
\mu (I_{wj})=\Delta (I_{wj})\mu (I_{w}).
\end{equation*}%
The task in proving the theorem is to define these weights appropriately.

There are two cases to consider for the proof of the theorem. The easier
case is when there are arbitrarily long boundary paths, in which case there
are also arbitrarily long boundary paths beginning at arbitrarily deep
levels.

\begin{lemma}
\label{longbdy}Assume $\dim _{A}E<D<\infty $ and let $\varepsilon =(D-\dim
_{A}E)/2$. If there is a KRS$(s,c,3c)$ construction of $E,$ with the choice
of $s\leq s(\varepsilon )$ from Lemma \ref{Lem:s}, which admits arbitrarily
long boundary paths, then there is a probability measure $\mu $ with support 
$E$ and $\dim _{A}\mu =D$.
\end{lemma}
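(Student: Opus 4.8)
The plan is to place most of the mass along a sequence of arbitrarily long boundary paths, so that near the ``top'' of each path the measure contracts at exactly the rate needed to force $\dim_A\mu = D$, while along interior children we distribute mass as evenly as Lemma~\ref{Lem:s} permits so that no other part of the construction contributes a larger upper Assouad dimension. Concretely, since $\dim_A E < D < \infty$ and $\varepsilon = (D-\dim_A E)/2$, Lemma~\ref{Lem:s} gives $N_w \le s^{-(\dim_A E + \varepsilon)} = s^{-(D-\varepsilon)}$ for all $I_w$ of level at least $k(\varepsilon)$. I would first fix an increasing sequence of levels $k_1 < k_2 < \cdots$ (all $\ge k(\varepsilon)$) and, using the hypothesis, select at level $k_i$ an interval starting a boundary path of length $L_i \to \infty$. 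Along such a boundary path, each interval splits and has a boundary child; I would send a fixed fraction $t \in (0,1)$ of the parent's mass down the boundary child that continues the path, and divide the remaining $1-t$ among the other children. Iterating $L$ steps, the mass of the terminal interval of the path is $\approx t^{L}\mu(I_w)$, while its length is $\approx s^{L}\ell(I_w)$ (each level shrinks lengths by a bounded multiple of $s$). Choosing $t = s^{D}$ makes the ratio $\mu(\text{ball of radius}\sim s^L)/\mu(\text{ball of radius}\sim 1)$ behave like $(s^{-L})^{D}$, i.e.\ like $(R/r)^{D}$; letting $L = L_i \to \infty$ along the sequence of paths forces $\dim_A\mu \ge D$.

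For the reverse inequality $\dim_A\mu \le D$ I would bound, for an arbitrary $x \in E$ and $0 < r \le R$, the ratio $\mu(B(x,R))/\mu(B(x,r))$. Pick levels $m \le n$ with $s^{n} \asymp r$ and $s^{m} \asymp R$. Then $B(x,R)$ is covered by boundedly many level-$m$ intervals (bounded overlap from Property~(i) together with the adjacency discussion) and $B(x,r)$ contains a level-$n$ interval through $x$, so it suffices to bound $\mu(I_v)/\mu(I_w)$ where $I_w \subseteq I_v$, $I_v$ has level $m$, $I_w$ has level $n$, by a constant times $(s^{m-n})^{D} = (R/r)^{D}$ — actually I will get $(R/r)^{D}$ times a factor absorbing the finitely many ``top'' levels below $k(\varepsilon)$. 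Descending from $I_v$ to $I_w$, at each level the mass is multiplied by the weight $\Delta$ assigned to the chosen child. The worst case is when every step lies on a boundary path and takes the continuing boundary child with weight $t = s^{D}$; on interior children, or on non-path boundary children, the weight is $(1-t)/(N_{w'}-1) \ge (1-t) s^{\dim_A E + \varepsilon}/\,2 = (1-t)s^{D-\varepsilon}/2 \ge s^{D}$ for $s$ small (using $N_{w'} \le s^{-(D-\varepsilon)}$ and that at most one boundary-continuing child is excluded). Hence every step multiplies the mass by at least a constant times $s^{D}$, giving $\mu(I_w)/\mu(I_v) \ge C s^{D(n-m)} \asymp C (r/R)^{D}$, which is exactly the upper bound $\dim_A\mu \le D$.

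The main obstacle is making the ``distribute mass evenly among the remaining children'' rule genuinely compatible with the boundary-path structure: a given interval may be a boundary child of its parent and simultaneously the start (or an interior node) of a boundary path, and boundary paths coming from different selected levels $k_i$ must not interfere. I would handle this by being careful that the designated boundary paths are chosen to be pairwise disjoint (shrinking the $L_i$ if necessary, or choosing the $k_i$ sparse enough) and by making the weight rule purely local: at each $I_w$, if $I_w$ lies on a currently active designated path, give weight $s^{D}$ to the unique continuing child and split $1 - s^{D}$ equally among the rest; otherwise split the full mass equally among all $N_w$ children. One then checks that equal splitting also gives each child weight $\ge s^{D}$ once $|w| \ge k(\varepsilon)$ (since $N_w \le s^{-(D-\varepsilon)} \le s^{-D}$), and for the finitely many levels $< k(\varepsilon)$ the weights are bounded below by an absolute constant, which only changes $C_2$ and not the exponent. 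Finally, $\mathrm{supp}\,\mu = E$ because every weight is strictly positive, so every $I_w$ gets positive mass and $\mu$ charges every relatively open subset of $E$.
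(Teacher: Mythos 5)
Your overall architecture (weight $s^{D}$ down designated boundary paths, near-equal weights elsewhere, lower bound from the path, upper bound from a per-level minimum weight of $s^{D}$) matches the paper's, and your lower bound $\dim_A\mu\ge D$ is fine. But there is a genuine gap in the upper bound, precisely at the step you dismiss as ``bounded overlap'': $B(x,R)$ can meet two level-$m$ intervals $I_v\ni x$ and $I_\sigma$, and you need $\mu(I_\sigma)\le C\,\mu(I_v)$, i.e.\ comparability of the measures of \emph{adjacent non-sibling} intervals. Their nearest common ancestor can lie arbitrarily many generations back, because the two descending chains consist of boundary intervals after the first step and, in the regime of this lemma, boundary paths are arbitrarily long. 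Your local rule treats these two chains asymmetrically: the designated chain gets weight $s^{D}$ at each level, while the adjacent chain gets the equal-splitting weight $1/N_{w'}\ge s^{D-\varepsilon}$ (indeed up to $1/2$). Over $l$ generations this gives $\mu(I_\sigma)/\mu(I_v)\ge s^{-\varepsilon l}$, which is unbounded as $l\to\infty$ while $R/r$ stays bounded (take $x$ the distinguished point of the terminal interval $I_v$ of a long designated path that has a long adjacent boundary path, $r=cs^{J}$ so $B(x,r)\subseteq I_v$, and $R$ a fixed multiple of $r$ so $B(x,R)\supseteq I_\sigma$). So your measure need not even be doubling, and $\dim_A\mu\le D$ fails in general.

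The paper's proof is built around exactly this obstruction: whenever it selects a boundary path it \emph{also} assigns the weight $a=s^{D}$ to the adjacent boundary path (should it exist), and it arranges the remaining weights so that any two adjacent cousins receive identical weights at every level strictly below their common ancestor's first descendants. This collapses the ratio $\mu(I_\sigma)/\mu(I_v)$ to a single quotient of two first-generation weights, each in $[a,1]$, yielding the uniform bound $a\,\mu(I_\eta)\le\mu(I_\sigma)\le a^{-1}\mu(I_\eta)$ for all adjacent cousins. To repair your argument you would need to add this symmetrization (or some equivalent mechanism forcing equal weights along adjacent descending chains); choosing the designated paths ``disjoint'' or the starting levels sparse does not address it, since the problematic adjacent path is forced on you by the geometry of $E$, not by your choices.
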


\begin{proof}
For each $j>k(\varepsilon )$, as defined in Lemma \ref{Lem:s}, choose a
boundary path beginning at level $n_{j}$ and having length $l_{j}\geq j,$
where $n_{j+1}>n_{j}+l_{j}$. Assume this is the path 
\begin{equation*}
P_{j}:=I_{v}^{(j)},I_{vj_{1}}^{(j)},I_{vj_{1}j_{2}}^{(j)},...,I_{vv^{\prime
}}^{(j)},
\end{equation*}%
where $\left\vert v\right\vert =n_{j}$ and $\left\vert v^{\prime
}\right\vert =l_{j}$. Consider, also, the adjacent boundary path (should it
exist)%
\begin{equation*}
I_{w}^{(j)},I_{wi_{1}}^{(j)},I_{wi_{1}i_{2}}^{(j)},...,I_{ww^{\prime }}^{(j)}
\end{equation*}%
where each $I_{vj_{1}j_{2}...j_{k}}^{(j)}$ is adjacent to $%
I_{wi_{1}i_{2}...i_{k}}^{(j)}$ and $\left\vert w^{\prime }\right\vert \leq
\left\vert v^{\prime }\right\vert $ (the inequality being strict if for some 
$m<\left\vert v^{\prime }\right\vert ,$ the interval $%
I_{wi_{1}...i_{m}}^{(j)}$ has no boundary child adjacent to $%
I_{vj_{1}...j_{m+1}}^{(j)}$). Notice that while $I_{v}^{(j)}$ and $%
I_{w}^{(j)}$ may be siblings, all other pairings are cousins.

We define the measure $\mu $ as follows: The weights associated with each of
the intervals $I_{vj_{1}}^{(j)},I_{vj_{1}j_{2},}^{(j)}$, $...,I_{vv^{\prime
}}^{(j)}$ and $I_{wi_{1}}^{(j)},I_{wi_{1}i_{2},}^{(j)}$, $...,I_{ww^{\prime
}}^{(j)}$ will be the value $a=s^{D}$. Each sibling of these intervals,
other than the distinguished sibling, will be assigned weight $%
p=s^{D-\varepsilon }>a$. Each of the distinguished siblings (which are not
any of the intervals in the paths) will be assigned the weight which ensures
that summing over all children of a given parent is $1$. This weight will be
at least $p$ since, as shown in Lemma \ref{Lem:s}, $p=s^{D-\varepsilon }\leq
1/(\max N_{w})$. The paths arising from different $j$ come from different
levels, hence this specification is well defined.

This procedure assigns weights to (precisely) all the children of each of $%
I_{v}^{(j)},I_{vj_{1}}^{(j)}$, $I_{vj_{1}j_{2},}^{(j)}$, $...,I_{(vv^{\prime
})^{-}}^{(j)}$ and $I_{w}^{(j)},I_{wi_{1}}^{(j)}$, $I_{wi_{1}i_{2},}^{(j)}$, 
$...,I_{(ww^{\prime })^{-}}^{(j)}$. For all remaining intervals, $I_{\sigma
},$ of level at least $k(\varepsilon )$, we assign weight $p$ to all
children of the parent $I_{\sigma }$, except the distinguished child which
is assigned the weight $1-p(N_{\sigma }-1)$, so the sum of the weights of
all the children of $I_{\sigma }$ is $1$. We also assign equal weights
(adding to $1$) to the intervals of level $k(\varepsilon )$. This defines
the probability measure $\mu $.

One important observation is that $a$ is the minimum weight assigned to any
interval.

Another important observation is that any two adjacent intervals that are
not siblings are either both assigned weight $a$ or both assigned weight $p$%
, and in either case, have equal weights. To see why this is relevant,
suppose that $I_{\sigma }$ and $I_{\eta }$ are adjacent cousins. Let $%
I_{\tau }$ be their nearest common ancestor and assume that $I_{\tau
},I_{\tau i_{1}},...,I_{\tau i_{1}...i_{l}}=I_{\sigma }$ and $I_{\tau
},I_{\tau j_{1}},...,I_{\tau j_{1}...j_{l}}=I_{\eta }$ are paths. Then $%
I_{\tau i_{1}}\neq I_{\tau j_{1}}$. Furthermore, while $I_{\tau i_{1}}$ and $%
I_{\tau j_{1}}$ may be siblings, all other pairs $I_{\tau i_{1}...i_{m}},$ $%
I_{\tau j_{1}...j_{m}}$ are adjacent cousins as they do not have the same
parent. Consequently, their weights are the same. Hence%
\begin{equation*}
\frac{\mu (I_{\sigma })}{\mu (I_{\eta })}=\frac{\mu (I_{\tau })\Delta
(I_{\tau i_{1}})\cdot \cdot \cdot \Delta (I_{\sigma })\text{ }}{\mu (I_{\tau
})\Delta (I_{\tau j_{1}})\cdot \cdot \cdot \Delta (I_{\eta })}=\frac{\Delta
(I_{\tau i_{1}})\text{ }}{\Delta (I_{\tau j_{1}})}.
\end{equation*}%
As $\Delta (I_{\tau i_{1}}),\Delta (I_{\tau j_{1}})\in \lbrack a,1],$ 
\begin{equation}
a\mu (I_{\eta })\leq \mu (I_{\sigma })\leq \frac{1}{a}\mu (I_{\eta })
\label{cousins}
\end{equation}%
for all adjacent cousins. A similar statement holds for the measures of any
two siblings.

We are now ready to compute the upper Assouad dimension of $\mu $. First,
consider any $r\leq R$ and $x\in E$. Choose integers $J,K$ such that 
\begin{equation*}
\frac{c}{2}s^{J+1}\leq R<\frac{c}{2}s^{J}\text{ }
\end{equation*}%
and 
\begin{equation*}
2Cs^{K}\leq r<2Cs^{K-1}\text{.}
\end{equation*}%
Since $2Cs^{K}\leq (c/2)s^{J}$ and $C>c,$ we have $K>J$. Choose $v,w=v\tau $
such that $\left\vert v\right\vert =J,$ $\left\vert w\right\vert =K$ and $%
x\in I_{w}\subseteq I_{v}$. As $\ell (I_{w})\leq 2Cs^{K},$ we have $%
I_{w}\subseteq B(x,r)$, and so $\mu (B(x,r))\geq \mu (I_{w})$. Since the
minimum value of any weight is $a$, it follows that $\mu (I_{w})\geq
a^{K-J}\mu (I_{v})$.

Consider $B(x,R)$. If this set was to intersect three intervals of level $J$%
, then it would contain one of them. But $2R<cs^{J}<$ length of any interval
of level $J$, hence this is impossible. Thus it can intersect at most two
such intervals, one of which must be $I_{v}$.

If $B(x,R)$ only intersects one interval of level $J$, then $B(x,R)\cap
E\subseteq I_{v}$. Hence $\mu (B(x,R))\leq \mu (I_{v})$ and 
\begin{equation*}
\frac{\mu (B(x,R))}{\mu (B(x,r))}\leq \frac{\mu (I_{v})}{a^{K-J}\mu (I_{v})}%
=a^{J-K}=s^{D(J-K)}\leq A_{1}\left( \frac{R}{r}\right) ^{D}
\end{equation*}%
for a suitable constant $A_{1}$.

If $B(x,R)$ intersects two intervals, $I_{v}$ and $I_{\sigma }$, then the
distance between $I_{v}$ and $I_{\sigma }$ can be at most the diameter of
the ball, $2R<cs^{J},$ so these are adjacent intervals. As noted in (\ref%
{cousins}), this means $\mu (I_{v})+\mu (I_{\sigma })\leq (1+1/a)\mu (I_{v})$%
. Thus we also have

\begin{equation*}
\frac{\mu (B(x,R))}{\mu (B(x,r))}\leq \frac{\mu (I_{v})+\mu (I_{\sigma })}{%
a^{K-J}\mu (I_{v})}\leq \frac{(1+\frac{1}{a})\mu (I_{v})}{a^{K-J}\mu (I_{v})}%
\leq A_{2}\left( \frac{R}{r}\right) ^{D}.
\end{equation*}%
It follows from these computations that $\dim _{A}\mu $ $\leq D$.

To see the equality, consider one of the special boundary paths, 
\begin{equation*}
P_{j}:=I_{v}^{(j)},I_{vj_{1}}^{(j)},...,I_{vv^{\prime }}^{(j)},
\end{equation*}%
where $I_{v}^{(j)}$ is of level $n_{j}$ and $I_{vv^{\prime
}}^{(j)}=I_{w}^{(j)}$ is of level $n_{j}+l_{j}$. Then $\mu (I_{w}^{(j)})=\mu
(I_{v}^{(j)})a^{l_{j}}$. Let $x_{j}$ be the distinguished point of $%
I_{w}^{(j)}$ and choose $r_{j}=cs^{n_{j}+l_{j}}$ so that $%
B(x_{j},r_{j})\subseteq I_{w}^{(j)}$. Take $R_{j}=2Cs^{n_{j}}$ so that $%
B(x_{j},R_{j})\supseteq I_{v}^{(j)}$. Then 
\begin{equation*}
\frac{\mu (B(x_{j},R_{j}))}{\mu (B(x_{j},r_{j}))}\geq \frac{\mu (I_{v}^{(j)})%
}{\mu (I_{w}^{(j)})}\geq \frac{\mu (I_{v}^{(j)})}{a^{l_{j}}\mu (I_{v}^{(j)})}%
=s^{-Dl_{j}}\geq A_{3}\left( \frac{R_{j}}{r_{j}}\right) ^{D}.
\end{equation*}%
As $R_{j}\rightarrow 0$ and $R_{j}/r_{j}\rightarrow \infty $ as $%
j\rightarrow \infty ,$ it follows that $\dim _{A}\mu \geq D,$ proving the
equality.
\end{proof}

The more difficult case is when there are not arbitrarily long boundary
paths. For this, it is convenient to introduce further terminology.

\begin{notation}
Assume we have fixed a KRS construction. For each $n\in \mathbb{N}$, let%
\begin{eqnarray*}
\zeta _{n} &=&\sup \{\varepsilon :\exists \text{ path beginning at level }%
\geq \text{ }n\text{ and length }L\geq n\text{ s.t. } \\
&&\text{at least }\varepsilon L\text{ of the intervals in the path split}\}.
\end{eqnarray*}%
Of course, $\zeta _{n}\in \lbrack 0,1]$ and decreases as $n$ increases. We
will set 
\begin{equation*}
\zeta =\lim_{n}\zeta _{n}
\end{equation*}%
and call $\zeta $ the \textbf{proportionality constant }of the KRS
construction.
\end{notation}

\begin{lemma}
\label{proportional}For any set $E$ of positive upper Assouad dimension and
any KRS\ construction with parameter $s<1/3,$ the proportionality constant $%
\zeta $ is strictly positive.
\end{lemma}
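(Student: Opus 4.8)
The plan is to prove the contrapositive: if the proportionality constant of a KRS construction of $E$ is zero, then $\dim _{A}E=0$. So assume $\zeta =0$. Since $(\zeta _{n})$ decreases to $\zeta $, for every $\varepsilon >0$ there is an integer $n=n(\varepsilon )\geq 1$ with $\zeta _{n}<\varepsilon $; by the definition of $\zeta _{n}$ this says that \emph{every} path beginning at a level $\geq n$ and having length $L\geq n$ splits at strictly fewer than $\varepsilon L$ of the intervals it passes through.

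The combinatorial engine is an elementary fact about trees: if $T$ is a finite rooted tree in which every vertex has at most $N$ children and along every root-to-leaf path at most $k$ vertices have two or more children, then $T$ has at most $N^{k}$ leaves. I would prove this by induction on $T$: if the root has a single child, replace $T$ by the subtree hanging from that child, which leaves $k$ unchanged; if the root has $m\geq 2$ children then necessarily $k\geq 1$, each of the $m\leq N$ principal subtrees has at most $N^{k-1}$ leaves by the inductive hypothesis, and summing gives $mN^{k-1}\leq N^{k}$. I then apply this to the tree of all descendants of a fixed interval $I_{v}$ of level $J\geq n$, truncated at level $J+t$ with $t\geq n$ (by Property (iii) every interval has a child, so all leaves of this tree sit at level $J+t$): each root-to-leaf path is a path of length $t$ beginning at level $J\geq n$, hence splits at fewer than $\varepsilon t$ of its intervals, so its number of branching vertices is at most $\lceil \varepsilon t\rceil -1$, and taking $N=N(s,E)$ from Property (iii) the tree estimate gives that $I_{v}$ has at most $N^{\lceil \varepsilon t\rceil -1}\leq N^{\varepsilon t}$ descendants of level $J+t$.

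Next I would run the standard covering computation. Fix $x\in E$ and $0<r<R<C_{1}:=(c/2)s^{n}$, and choose integers $J,K$ with $(c/2)s^{J+1}\leq R<(c/2)s^{J}$ and $2Cs^{K}\leq r<2Cs^{K-1}$; then $J\geq n$ and $K>J$. As in the proof of Lemma~\ref{longbdy}, $2R<cs^{J}$ forces $B(x,R)$ to meet at most two intervals of level $J$, so $B(x,R)\cap E$ is covered by the level-$K$ descendants of these (at most two) intervals, each of diameter $\leq 2Cs^{K}\leq r$. If $K-J\geq n$, the tree estimate gives $N_{r}(B(x,R)\cap E)\leq 2N^{\varepsilon (K-J)}$, and since a direct computation yields $s^{-(K-J)}\leq (4C/(c\,s^{2}))(R/r)$, this is at most $C_{2}(R/r)^{\alpha }$ with $\alpha :=\varepsilon \log N/\log (1/s)$ and $C_{2}:=2(4C/(c\,s^{2}))^{\alpha }$. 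If $K-J<n$, then trivially $N_{r}(B(x,R)\cap E)\leq 2N^{K-J}\leq 2N^{n}\leq C_{2}(R/r)^{\alpha }$ after enlarging $C_{2}$ to $\max \{2(4C/(c\,s^{2}))^{\alpha },2N^{n}\}$. Hence $\dim _{A}E\leq \varepsilon \log N/\log (1/s)$ for every $\varepsilon >0$, so $\dim _{A}E=0$, contradicting the hypothesis $\dim _{A}E>0$. Therefore $\zeta >0$.

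I do not anticipate a serious obstacle. The one point that needs a little care is matching ``branching vertices on a path'' in the tree lemma with ``intervals in the path that split'' in the definition of $\zeta _{n}$: these agree except possibly at the final (leaf) interval of the truncated descendant tree, whose children in the full construction are irrelevant to counting leaves, and since only an upper bound on the number of branching vertices is needed this discrepancy is harmless. The remaining work is routine bookkeeping with the constants $c,C,s$ and absorbing the floor in $N^{\lceil \varepsilon t\rceil -1}\leq N^{\varepsilon t}$.
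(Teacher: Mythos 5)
Your argument is correct; I checked the tree lemma, the identification of branching vertices with splitting intervals (the discrepancy at the leaf level is indeed harmless since you only need an upper bound), and the constants in the covering step, and everything goes through. It is, however, essentially the contrapositive of the paper's proof rather than a genuinely new idea: the combinatorial core in both is the same observation that a finite tree with at most $N$ children per vertex and at most $k$ branchings along any root-to-leaf path has at most $N^{k}$ leaves. The paper runs this forwards: from $t<\dim _{A}E$ it extracts a single bad configuration $(x_{n},R_{n},r_{n})$, thins the optimal $r_{n}$-cover to get many $r_{n}$-separated level-$M$ descendants of one level-$N$ interval, and concludes that some path must split on a definite proportion of its levels, giving the quantitative bound $\zeta \geq \gamma \,t$. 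You run it backwards: from $\zeta _{n}<\varepsilon $ you bound the number of level-$K$ descendants of a level-$J$ interval by $N^{\varepsilon (K-J)}$ and feed this into the standard two-interval covering estimate (the same one used in Lemma \ref{longbdy}) to get $\dim _{A}E\leq \varepsilon \log N/\log (1/s)$, i.e.\ $\dim _{A}E\lesssim \zeta $. The two directions yield equivalent quantitative relations between $\zeta $ and $\dim _{A}E$; yours avoids the slightly fiddly extraction of separated intervals and the bookkeeping relating $k_{n}$, $M$ and $N$, at the cost of having to carry out a full covering argument over all admissible $x$, $r$, $R$ rather than exhibiting one witness.
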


\begin{remark}
Under the much stronger hypothesis, $\dim _{L}E>0,$ we actually have $\zeta
=1$ for small enough $s$. This can be seen in the proof of Theorem \ref%
{lower}.
\end{remark}

\begin{proof}
Let $0<t<\dim _{A}E$. We can find $x_{n}\in E,$ $R_{n}$ and $r_{n}<R_{n}$
with $R_{n}\rightarrow 0$ and $R_{n}/r_{n}\rightarrow \infty ,$ such that 
\begin{equation*}
N_{r_{n}}(B(x_{n},R_{n})\cap E)\geq n\left( \frac{R_{n}}{r_{n}}\right) ^{t}.
\end{equation*}%
Choose integer $k_{n}$ such that 
\begin{equation}
s^{-(k_{n}-1)}<R_{n}/r_{n}\leq s^{-k_{n}}.  \label{kn}
\end{equation}%
Without loss of generality we can assume that $R_{n}<s^{n}$ and $k_{n}\geq n$%
.

Temporarily fix $n$ and choose the largest integer $N$ such that $%
cs^{N}/2>R_{n}$. At most two level $N$ intervals from the KRS$(s,c,C)$
construction can intersect $B(x_{n},R_{n})$ and hence one of them, call it $%
I_{w},$ must satisfy%
\begin{equation*}
N_{r_{n}}(I_{w}\cap E)\geq \frac{n}{2}\left( \frac{R_{n}}{r_{n}}\right) ^{t}.
\end{equation*}%
Consider the optimal covering of $I_{w}$ by intervals of radius $r_{n}$. By
ordering these intervals left-to-right in $\mathbb{R}$ and keeping every
third interval, we see there are at least%
\begin{equation*}
\frac{n}{6}\left( \frac{R_{n}}{r_{n}}\right) ^{t}\geq \frac{n}{6}%
s^{-(k_{n}-1)t}
\end{equation*}%
disjoint intervals $\{J_{l}\}$ of radius $r_{n}$ and separated by at least $%
r_{n}$, each of which contains at least one point of $E\cap I_{w}$.

Let $M$ be the smallest integer such that $2Cs^{M}<r_{n}$ and for each $%
J_{l} $ choose a subinterval $I_{wu_{l}}$ of level $M,$ which intersects $%
J_{l}$ and no other $J_{m}$. The definitions of $N,M$ ensure that%
\begin{equation*}
\frac{cs^{N+1}}{4Cs^{M-1}}\leq \frac{R_{n}}{r_{n}}\leq \frac{cs^{N}}{4Cs^{M}}%
.
\end{equation*}%
Combined with (\ref{kn}), this implies%
\begin{equation*}
\frac{\log (4C/c)}{\log s}+M-N-2\leq k_{n}\leq \frac{\log (4C/c)}{\log s}%
+M-N+1,
\end{equation*}%
hence for large enough $n,$ $n\leq k_{n}\leq M-N+1\leq 2k_{n}$.

Now consider all the paths beginning at level $N$ with interval $I_{w}$ and
ending at level $M$, hence of length $M-N$. As each KRS\ interval can have
at most $(C/c)s^{-1}$ children, an easy induction argument shows that if
every such path had at most $\varepsilon (M-N)$ splittings, then there would
be a total of at most $\left( (C/c)s^{-1}\right) ^{\varepsilon (M-N)}$ of
these paths. But as the intervals $I_{wu_{l}}$ are all distinct, there are
at least $ns^{-(k_{n}-1)t}/6$ of these paths. Since $M-N\sim k_{n},$ we
deduce that $\varepsilon \geq \gamma t$ for a suitable constant $\gamma >0.$
It follows that $\zeta >0$.
\end{proof}

We are now ready to complete the proof of Theorem \ref{upper}.

\begin{proof}
\lbrack of Theorem \ref{upper}] First, suppose $D<\infty $, say $D=\dim
_{A}E $ $+2\varepsilon $ for $\varepsilon >0$. Choose $s=s(\varepsilon )$ as
given in Lemma \ref{Lem:s} and consider a KRS$(s,c,3c)$ construction of $E$.
If this construction admits arbitrarily long boundary paths, then Lemma \ref%
{longbdy} can be applied to produce a measure supported on $E$ with upper
Assouad dimension $D$.

Thus we can assume there exists some $M>0$ such that there are no boundary
paths of length exceeding $M$. We will take $a>0$ such that $a^{\zeta
}=s^{D},$ where $\zeta $ is the proportionality constant of this KRS
construction, guaranteed to be positive by Lemma \ref{proportional}. Notice
that Lemma \ref{Lem:s} gives 
\begin{equation*}
a\leq s^{D}<s^{D-\varepsilon }\leq 1/(\max N_{w}).
\end{equation*}

We begin by defining the measure. Choose $n_{1}>M$ such that $\left\vert
\zeta _{n_{1}}-\zeta \right\vert <\zeta /2$. Choose a path $P_{1}$ beginning
at level at least $n_{1}$ and with length $L_{1}\geq $ $n_{1},$ which splits
on at least $\,L_{1}\zeta /2$ levels. The choice of $\zeta _{n_{1}}$ also
ensures no path beginning at level at least $n_{1}$ and length $L_{1}\geq $ $%
n_{1}$ splits on more than $L_{1}3\zeta /2$ levels.

Assuming $n_{1}<n_{2}<\cdot \cdot \cdot <n_{j-1}$ and paths $%
P_{1},..,P_{j-1} $ have been identified (with the properties as described in
the induction step outlined below), choose $n_{j}$ much greater than the
level of the last interval in path $P_{j-1},$ such that $\left\vert \zeta
_{n_{j}}-\zeta \right\vert <\zeta 2^{-j}$. Choose a path $P_{j}$ beginning
at level at least $n_{j}$, length $L_{j}\geq $ $n_{j},$ and which splits on
at least $L_{j}\zeta (1-2^{-j})$ levels. Furthermore, notice no such path
can split on more than the proportion $\zeta (1+2^{-j})$ levels. That is the
induction step.

Next, consider these special paths $%
P_{j}:I_{v},I_{vi_{1}},...,I_{vi_{1},..,i_{L_{j}}}$. If the interval $%
I_{vi_{1}...i_{l}}$ is an only child$,$ we assign weight $1$ to it.
Otherwise, we assign it the weight $a$ and distribute the remaining $1-a$
equally among its siblings. Notice these remaining weights will have value
at least $a$.

For all remaining intervals in the KRS\ construction (those that are not
siblings of intervals in one of the special paths), we simply assign equal
weight to all children of a given parent so that the sum of the weights of
the sibling group is $1$. This completes the definition of the measure $\mu $%
. It is clear from the construction that the support of $\mu $ is the set $E 
$. It is important to observe that$\ a$ is the minimum weight.

To show that $\dim _{A}\mu =D,$ we will actually verify that for every $%
\varepsilon >0$ there are constants $C^{\prime },C^{\prime \prime }$ and $%
\rho $ (depending on $\varepsilon )$ such that for all $r\leq R\leq \rho $
and $x\in E$, we have 
\begin{equation}
\frac{\mu (B(x,R))}{\mu (B(x,r))}\leq C^{\prime }\left( \frac{R}{r}\right)
^{D(1+\varepsilon )}  \label{1}
\end{equation}%
and that for some sequence $r_{n}\leq R_{n}\leq \rho $ and $x_{n}\in E$ with 
$R_{n}/r_{n}\rightarrow \infty ,$ we have 
\begin{equation}
\frac{\mu (B(x_{n},R_{n}))}{\mu (B(x_{n},r_{n}))}\geq C^{\prime \prime
}\left( \frac{R_{n}}{r_{n}}\right) ^{D(1-\varepsilon )}.  \label{2}
\end{equation}

Fix $\varepsilon >0$ and choose $J$ such that $2^{-J}<\varepsilon $. Put $%
\rho =(c/2)s^{n_{J}}$. Take any $x\in E$, $r\leq R\leq \rho $ and assume 
\begin{equation*}
\frac{c}{2}s^{j+1}\leq R<\frac{c}{2}s^{j}\text{ and }2Cs^{k}\leq r<2Cs^{k-1}
\end{equation*}%
for suitable $j\geq n_{J}$ and $k>j.$

As in the proof of Lemma \ref{longbdy}, there are intervals $I_{w}\subseteq
I_{v}$ such that $x\in I_{w},$ $\left\vert w\right\vert =k$, $\left\vert
v\right\vert =j$, $B(x,r)\supseteq I_{w}$ and $B(x,R)\subseteq I_{v}\cup
I_{\sigma }$ where $I_{\sigma }$ is also of level $j$ and is adjacent to $%
I_{v}$. If $I_{v}$ and $I_{\sigma }$ are siblings, their measures are
comparable. So assume their common ancestor, $I_{\tau },$ is $m$ generations
earlier. Since the path from this ancestor to either of $I_{v}$ or $%
I_{\sigma }$ is a boundary path (except possibly at the first descendent), $%
m\leq M+1$. But then%
\begin{equation*}
\mu (I_{v})\geq a^{m}\mu (I_{\tau })\geq a^{M+1}\mu (I_{\tau })\geq
a^{M+1}\mu (I_{\sigma }),
\end{equation*}%
so again their measures are comparable. Hence%
\begin{equation*}
\mu (B(x,R))\leq (1+a^{-(M+1)})\mu (I_{v}).
\end{equation*}

Now consider the path from $I_{v}$ to $I_{w}$. This path begins at level at
least $n_{J}$. If the path length, $k-j,$ is at least $n_{J},$ then by the
definition of $\zeta _{n_{J}}$ there would be at most $\zeta (1+2^{-J})(j-k)$
levels where the path splits. On the levels where it does not split, the
assigned weight is $1$ and, of course, all weights are at least $a$. Hence,
in this case, 
\begin{equation*}
\mu (B(x,r))\geq \mu (I_{w})\geq a^{\zeta (1+2^{-J})(j-k)}\mu (I_{v}).
\end{equation*}%
Consequently, for suitable constants $C_{1},C_{2}$ we have 
\begin{eqnarray*}
\frac{\mu (B(x,R))}{\mu (B(x,r))} &\leq &\frac{(1+a^{-(M+1)})\mu (I_{v})}{%
a^{\zeta (1+2^{-J})(k-j)}\mu (I_{v})}\leq C_{1}a^{\zeta (1+2^{-J})(j-k)} \\
&\leq &C_{1}s^{D(1+2^{-J})(j-k)}\leq C_{2}\left( \frac{R}{r}\right)
^{D(1+\varepsilon )}.
\end{eqnarray*}

Otherwise, $k-j<n_{J}$ and then we have $\mu (B(x,r))\geq a^{k-j}\mu
(I_{v})\geq a^{n_{J}}\mu (I_{v})$. Thus%
\begin{equation*}
\frac{\mu (B(x,R))}{\mu (B(x,r))}\leq \frac{(1+a^{-(M+1)})\mu (I_{v})}{%
a^{n_{J}}\mu (I_{v})}\leq C_{3}\leq C_{3}\left( \frac{R}{r}\right) ^{D}.
\end{equation*}%
for some constant $C_{3}$. This establishes (\ref{1}).

To see (\ref{2}), consider the special path $P_{j}$ for $j\geq J$. Suppose
this path begins at $I_{v}^{(j)}$ of level $M_{j}$ $\geq $ $n_{j}$ and ends
at $I_{w}^{(j)},$ having length $L_{j}\geq n_{j}.$ This path splits on at
least $\zeta (1-2^{-j})L_{j}\geq \zeta (1-\varepsilon )L_{j}$ levels (and
the weights are $a$ on the splitting children, $1$ on the non-splitting
levels). Thus 
\begin{equation*}
\frac{\mu (I_{v}^{(j)})}{\mu (I_{w}^{(j)})}\geq a^{-\zeta (1-\varepsilon
)L_{j}}.
\end{equation*}%
Take $x_{j}$ to be the distinguished point of $I_{w}^{(j)}$ and let $%
r_{j}=cs^{M_{j}+L_{j}},$ so $B(x_{j},r_{j})\subseteq I_{w}^{(j)}$. Take $%
R_{j}=\left\vert I_{v}^{(j)}\right\vert \sim s^{M_{j}}$. Then $%
B(x_{j},R_{j})\supseteq I_{v}^{(j)},$ thus%
\begin{equation*}
\frac{\mu (B(x_{j},R_{j}))}{\mu (B(x_{j},r_{j}))}\geq \frac{\mu (I_{v}^{(j)})%
}{\mu (I_{w}^{(j)})}\geq a^{-\zeta (1-\varepsilon
)L_{j}}=s^{-D(1-\varepsilon )L_{j}}\geq C_{4}\left( \frac{R_{j}}{r_{j}}%
\right) ^{D(1-\varepsilon )}.
\end{equation*}%
Since $R_{j}/r_{j}\sim s^{-L_{j}}\rightarrow \infty $, this establishes (\ref%
{2}) and that completes the proof for $D<\infty .$

\medskip

Now suppose $D=\infty $. Choose a sequence $\{D_{n}\}\rightarrow \infty $
with $\dim _{A}E<D_{n}<\infty $. Assume $D_{1}=\dim _{A}E+2\varepsilon $ and
take a KRS$(s(\varepsilon ),c,3c)$\ construction.

If this KRS construction admits arbitrarily long boundary paths, choose
paths $P_{j}$ as in the proof of Lemma \ref{longbdy} and assign weights $%
a_{j}$ $=s^{D_{j}}$ to the intervals in the paths $P_{j}$. All other
intervals will be assigned equal weights chosen to ensure that the sum over
all children of a given parent is $1$. Let $\mu $ be the associated measure.
This measure may no longer have the property that the measure of adjacent
intervals are comparable, but this is not necessary for obtaining the lower
bound on the upper Assouad dimension. As seen in the final paragraph of the
proof of Lemma \ref{longbdy}, it is enough to consider the starting and
ending intervals on the path $P_{j},$ say $I_{v}^{(j)}$ and $I_{w}^{(j)},$
and the measure of the balls $B(x_{j},R_{j})\supseteq I_{v}^{(j)}$ and $%
B(x_{j},r_{j})\subseteq I_{w}^{(j)},$ to deduce that $\dim _{A}\mu \geq
D_{j} $. Since $D_{j}\rightarrow \infty ,$ we conclude that $\dim _{A}\mu
=\infty $.

Otherwise, there are not arbitrarily long boundary paths. Then we modify the
argument given in the first part of the proof of this theorem in a similar
fashion. Instead of assigning weight $a=s^{D/\zeta }$, we assign weight $%
a_{j}=s^{D_{j}/\zeta }$ to the intervals in the paths $P_{j}$ that split and
assign the remaining intervals the suitable weights so that the sum of all
children of a given parent is $1$. Again we do not need the comparability of
the measure of adjacent intervals as consideration of the intervals in the
special paths is enough to see that $\dim _{A}\mu =\infty $.
\end{proof}

\begin{remark}
We remark that the same proof works for any compact set $E\subseteq \mathbb{R%
}$ for which there is a KRS\ construction with positive proportionality
constant and this can happen without $\dim _{A}E>0$. For example, let $E$ be
the closure of the set $\{\alpha ^{n}:n=0,1,2,...\}$ with $\alpha <1$. We
can take a KRS\ construction with $s=\alpha $ that has $\zeta _{n}=1$ for
all $n$. It is shown in \cite{FH} that if $\mu $ is the discrete measure $%
\sum_{n=1}^{\infty }\alpha ^{\lambda n}\delta _{\alpha ^{n}}$ having support 
$E,$ then $\dim _{A}\mu =\lambda $. Our methods also give this.
\end{remark}

If, however, $\zeta =0$ for all KRS constructions, then it need not be the
case that the set $E$ supports measures with all possible upper Assouad
dimensions. One trivial example is to take $E$ to be a finite set. Then any
measure supported on $E$ will have upper Assouad dimension zero.

More interestingly, one can construct infinite compact sets which support
only measures with upper Assouad dimension either zero or infinity, with
both values arising.

\begin{proposition}
There is an infinite compact subset $E$ of $\mathbb{R}$ such that%
\begin{equation*}
\{\dim _{A}\mu :\text{support }\mu =E\}=\{0,\infty \}.
\end{equation*}
\end{proposition}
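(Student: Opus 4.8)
The plan is to build $E$ as a countable union of finite ``clusters'' placed at geometrically decreasing scales, arranged so that locally $E$ looks, at the relevant scales, either like a single point or like a copy of a set whose own KRS constructions all have zero proportionality constant, while globally there is enough room to force either extreme behaviour depending on how a measure distributes its mass. Concretely, I would fix a sequence $s_k\downarrow 0$ and, at stage $k$, replace each current ``active'' point by a finite block of $m_k$ points spread over an interval of length $\sim s_1\cdots s_{k-1}$ with gaps of order $s_1\cdots s_k$, but only for one distinguished point per stage (so that $E$ has a single accumulation structure rather than a self-similar one). The key design constraint is that the number of children $m_k$ in any KRS construction of $E$ grows so slowly (e.g. $m_k\to\infty$ but $\sum (\log m_k)/(\text{level}) \to 0$ along every path that records the refinements) that no path can split on a positive proportion of its levels: one checks that $\zeta_n\to 0$ for every KRS construction of $E$. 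This is the analogue, in the reverse direction, of Lemma \ref{proportional}: there the hypothesis $\dim_A E>0$ \emph{forced} $\zeta>0$; here the geometry of $E$ is engineered so that $\zeta=0$ always, which necessarily means $\dim_A E=0$ as well.

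Next I would prove the upper bound ``$\dim_A\mu\in\{0,\infty\}$ and never anything finite positive.'' Suppose $\mu$ is supported on $E$ with $\dim_A\mu=D<\infty$; I want a contradiction unless $D=0$. Using the KRS nested-intervals description of $E$ together with the hypothesis $\dim_A\mu=D$, one gets, for each sibling group at level $n$, a lower bound on the minimum child-weight of the form (min weight)$\,\ge\,c\,s_{n}^{D}$ (comparing the ball around a child's distinguished point to the ball around the parent); this is the standard translation between $\dim_A\mu\le D$ and weight ratios, exactly as used in the proofs of Lemmas \ref{longbdy} and the main theorem. Because each parent has only $m_n$ children and the weights sum to $1$, the minimum weight is at most $1/m_n$, while the maximum weight is at most $1-(m_n-1)c s_n^{D}$. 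Now follow a path that threads through the refined clusters: at the stages $k$ where splitting occurs, the weight drops by a factor that is at least $c s_{n_k}^{D}$ but, crucially, since $m_k\to\infty$, is in fact forced to be roughly $1/m_k$ along the ``heavy'' child and exponentially small along the light ones. Telescoping along a long path that passes through $\ell$ consecutive refinement stages, one obtains a ratio $\mu(B(x,R))/\mu(B(x,r))$ that, when $D>0$, grows like $(R/r)^{D'}$ for some $D'>D$, contradicting $\dim_A\mu=D$ --- unless the only consistent value is $D=0$. The point is that the sparse geometry ($\zeta=0$) leaves \emph{no room} for a genuinely $D$-regular-from-above measure with $0<D<\infty$: any measure is either so evenly spread that its upper Assouad dimension is $0$, or it concentrates and the concentration, played against the slowly-growing $m_k$, blows the dimension up to $\infty$.

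Finally I would exhibit the two achievable values. For $\dim_A\mu=0$: take $\mu$ to assign, at each sibling group, nearly equal weights (say each child gets weight within a bounded factor of $1/m_n$); then for any $x,R,r$ the ratio $\mu(B(x,R))/\mu(B(x,r))$ is bounded by a product of the $m_n$'s over the intervening levels, and because $m_n$ grows slowly compared to $s_n^{-1}$ --- more precisely $\log m_n / \log(1/s_n)\to 0$ --- this product is $o((R/r)^{\varepsilon})$ for every $\varepsilon>0$, giving $\dim_A\mu=0$. For $\dim_A\mu=\infty$: mimic the $D=\infty$ case of Theorem \ref{upper}, choosing along a fixed sequence of paths $P_j$ weights $a_j=s^{D_j}$ with $D_j\to\infty$ on the splitting children and filling in the rest evenly; the computation at the starting and ending intervals of $P_j$ (exactly as in the last paragraph of the proof of Lemma \ref{longbdy}) yields $\dim_A\mu\ge D_j$ for all $j$, hence $\dim_A\mu=\infty$. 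The main obstacle I anticipate is the quantitative design of the scales $s_k$ and cluster sizes $m_k$: one needs $m_k\to\infty$ fast enough that some measure can concentrate mass to produce $\dim_A\mu=\infty$, yet $m_k$ slow enough (relative to $1/s_k$) that $\zeta=0$ for \emph{every} KRS construction and that the near-uniform measure genuinely has $\dim_A\mu=0$ --- and then verifying that these two requirements are compatible and that they indeed preclude every finite positive value. Getting the KRS-construction-independence of $\zeta=0$ right (since a priori there are many admissible nested-interval constructions of $E$) is the subtle part; I would handle it by noting that any KRS interval of level $\ell$ has length comparable to $s_1\cdots s_\ell$ and so its internal structure is dictated, up to bounded combinatorics, by which refinement stages it straddles, making the splitting count along any path controllable by the $m_k$'s regardless of the particular construction.
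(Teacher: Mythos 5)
Your overall intuition --- make the splitting structure of $E$ so sparse that any measure is forced to be either non-doubling or of dimension zero --- is the same intuition behind the paper's example, but your write-up has a genuine gap at exactly the step that carries the whole proposition. The construction of $E$ is never pinned down (the paper simply takes $E=\{0\}\cup\{\alpha^{M^{n}}:n\in\mathbb{N}\}$ with $\alpha<1$, $M>1$, a single super-geometrically converging sequence), and, more seriously, the middle paragraph that is supposed to exclude $0<\dim_{A}\mu<\infty$ is not an argument. The assertion that ``telescoping along a long path \ldots one obtains a ratio that, when $D>0$, grows like $(R/r)^{D'}$ for some $D'>D$'' is unsupported: you never identify which scales $R,r$ witness the blow-up, and the claim that $\zeta=0$ for every KRS construction forces the dichotomy is itself a nontrivial theorem that you assume rather than prove. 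Note also that the paper does not claim this implication; its remark only says that $\zeta=0$ means the conclusion of Theorem \ref{upper} \emph{need not} hold, and the proposition is then proved by a direct, KRS-free computation.

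What is actually needed, and what the paper supplies, is a quantitative mechanism for the excluded middle. Writing $\mu=\sum p(n)\delta_{x_{n}}$ and $t_{N}=\sum_{n\geq N}p(n)$, the paper shows: if $p(0)\neq 0$ or if $t_{n}/t_{n+1}$ has a subsequence tending to $1$ or to $\infty$, then $\mu$ fails to be doubling at the scales $R\sim x_{n}$, $r\sim x_{n}/2$ (or $R\sim 2x_{n}$, $r\sim x_{n}$ centred at $0$), so $\dim_{A}\mu=\infty$; otherwise $1<\lambda\leq t_{n}/t_{n+1}\leq\Lambda$, and then the double-exponential spacing $\log x_{n}=M^{n}\log\alpha$ converts the bound $\mu(B(x,R))/\mu(B(x,r))\leq C\Lambda^{K-N}$ into $C_{0}(\log r/\log R)^{\gamma}\leq C_{0}(R/r)^{\gamma s}$ for every $s>0$, giving $\dim_{A}\mu=0$. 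Your sketch has no analogue of this case analysis or of the $\log r/\log R$ estimate; without it, nothing rules out a measure whose weights decay at an intermediate rate. Your final paragraph (exhibiting measures achieving $0$ and $\infty$) is the easy part and is fine in spirit, though invoking the $D=\infty$ case of Theorem \ref{upper} is unnecessary: any mass at a non-isolated point already gives $\dim_{A}\mu=\infty$. I would recommend abandoning the KRS framework here entirely and arguing directly on an explicit discrete set, as the paper does.
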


\begin{proof}
Let $\alpha <1$ and $M>1$. Define $x_{0}=0$ and $x_{n}=\alpha ^{M^{n}}$ for $%
n\in \mathbb{N}$. Set $E=\{x_{n}:n\geq 0\}$. We will see this set has the
desired properties. Note that any measure supported on $E$ is of the form $%
\mu =\sum_{n=0}^{\infty }p(n)\delta _{x_{n}}$ for a nonegative, summable
sequence $\{p(n)\}_{n=0}^{\infty }$.

Choose $n$ large enough that $3x_{n}<x_{n-1}$ and $x_{n}>2x_{n+1}$. Then $%
\mu (B(x_{n},2x_{n}))=\mu ([0,3x_{n}))\geq p(0),$ while $\mu
(B(x_{n},x_{n}/2))=\mu ((x_{n}/2,3x_{n}/2))=p(n)$. Since $p(n)\rightarrow 0$
as $n\rightarrow \infty ,$ it follows that if $p(0)\neq 0,$ then $\mu $ is
not doubling and hence its Assouad dimension is infinite.

Thus we now assume $p(0)=0$. Put $t_{N}=\sum_{n=N}^{\infty }p(n)$. We will
distinguish between three cases.

Case (i). There is a subsequence with $\lim_{j\rightarrow \infty
}t_{n_{j}}/t_{n_{j}+1}=1$. Pick $n_{j}$ so that $3x_{n_{j}}<x_{n_{j}-1}$ and 
$x_{n_{j}}>2x_{n_{j}+1}$. Then $\mu (B(x_{n_{j}},2x_{n_{j}}))=t_{n_{j}}$ and 
\begin{equation*}
\mu (B(x_{n_{j}},x_{n_{j}}/2))=p(n_{j})=t_{n_{j}}-t_{n_{j}+1}.
\end{equation*}
Since the subsequence satisfies $t_{n_{j}}/(t_{n_{j}}-t_{n_{j}+1})%
\rightarrow \infty $, we again conclude that $\dim _{A}\mu =\infty .$

Case (ii). There is a subsequence with $\lim_{j\rightarrow \infty
}t_{n_{j}}/t_{n_{j}+1}=\infty $. If $2x_{n_{j}}<x_{n_{j}-1}$ we have that $%
\mu (B(0,2x_{n_{j}}))=t_{n_{j}}$ and $\mu (B(0,x_{n_{j}}))=t_{n_{j}+1}$.
Thus 
\begin{equation*}
\frac{\mu (B(0,2x_{n_{j}}))}{\mu (B(0,x_{n_{j}}))}=\frac{t_{n_{j}}}{%
t_{n_{j}+1}}
\end{equation*}
and again we deduce that $\dim _{A}\mu =\infty .$

Case (iii). Otherwise, there are constants $1<\lambda \leq \Lambda $ such
that $\lambda \leq t_{n}/t_{n+1}\leq \Lambda $ for all $n$. Let $\rho
=x_{1}-x_{2}$. Fix $x\in E$ and $0<r<R\leq \rho $. If $x=x_{1},$ then choose
the minimal $L\geq 1$ such that $x_{L+1}\leq x-r$. Then 
\begin{equation}
\frac{\mu (B(x,R))}{\mu (B(x,r))}\leq \frac{t_{1}}{t_{1}-t_{L+1}}\leq \frac{%
t_{1}}{t_{1}(1-t_{L+1}/t_{1})}\leq \frac{1}{1-\lambda ^{-1}}.  \label{x1}
\end{equation}%
Otherwise, $x\leq x_{2}$ and we can choose $N\geq 1$ such that $%
x_{N+1}<x+R\leq x_{N}$, so $\mu (B(x,R))\leq \mu ([0,x_{N}])\leq t_{N}$.
Choose $K$ such that $x\leq x_{K}<x+r\leq x_{K-1}$. If $x-r>0$, pick $L\geq
K $ such that $x_{L+1}\leq x-r<x_{L}$ $\leq x$. In this case, 
\begin{equation*}
\mu (B(x,r))=\mu ((x-r,x+r))\geq \mu ([x_{L},x_{K}])=t_{K}-t_{L+1}\geq
t_{K}(1-\lambda ^{-1}),
\end{equation*}%
so we have the bound 
\begin{equation}
\frac{\mu (B(x,R))}{\mu (B(x,r))}\leq \frac{t_{N}}{(1-\lambda ^{-1})t_{K}}%
\leq (1-\lambda ^{-1})^{-1}\Lambda ^{K-N}.  \label{B/b}
\end{equation}%
If $x-r\leq 0$, then $\mu (B(x,r))\geq t_{K}$ and a similar estimate is
still valid.

Put $\gamma :=\log \Lambda /\log M>0$. From the relations $\alpha
^{M^{N+1}}=x_{N+1}\leq x+R$ and $x+r\leq x_{K-1}=\alpha ^{M^{K-1}}$ we
obtain 
\begin{equation*}
\Lambda ^{-N}=M^{-\gamma N}\leq \left( \frac{M\log \alpha }{\log (x+R)}%
\right) ^{\gamma }\text{ and }\Lambda ^{K}=M^{\gamma K}\leq \left( \frac{%
M\log (x+r)}{\log \alpha }\right) ^{\gamma }.
\end{equation*}%
Using (\ref{B/b}) and these bounds on $\Lambda ^{-N}$ and $\Lambda ^{K}$ we
deduce that 
\begin{equation*}
\frac{\mu (B(x,R))}{\mu (B(x,r))}\leq C_{0}\left( \frac{\log (x+r)}{\log
(x+R)}\right) ^{\gamma }
\end{equation*}%
for a suitable constant $C_{0}$.

Consider the function $F(x)=\frac{\log (x+r)}{\log (x+R)}$, whose derivative
is 
\begin{equation*}
\frac{(x+r)^{-1}\log (x+R)-(x+R)^{-1}\log (x+r)}{\log ^{2}(x+R)}.
\end{equation*}%
As the function $\varphi (t)=t\log t$ is decreasing if $t<e^{-1},$ $%
F^{\prime }<0$ for small enough $R$. Thus $F(x)^{\gamma }\leq F(0)^{\gamma }$
for small $x\geq 0$. Moreover, for any $s>0,$ the function $\varphi
_{s}(t)=t^{s}\log t$ is decreasing if $t<e^{-1/s}$ and hence $\log r/\log
R\leq \left( R/r\right) ^{s}$ for small enough $R$. Consequently%
\begin{equation*}
\frac{\mu (B(x,R))}{\mu (B(x,r))}\leq C_{0}\left( \frac{\log r}{\log R}%
\right) ^{\gamma }\leq C_{0}\left( \frac{R}{r}\right) ^{\gamma s},
\end{equation*}%
for any $s>0.$ Together with (\ref{x1}) this proves $\dim _{A}\mu =0$.
\end{proof}

\section{Lower Assouad Dimension}

The corresponding result for lower Assouad dimension is much easier. We can
even arrange to simultaneously achieve both the prespecified upper and lower
Assouad dimensions when the lower Assouad dimension of $E$ is positive.

\begin{theorem}
\label{lower}Given $0<d<\dim _{L}E\leq \dim _{A}E<D<\infty ,$ there is a
measure $\mu $ with support $E$, $\dim _{L}\mu =d$ and $\dim _{A}\mu =D.$
\end{theorem}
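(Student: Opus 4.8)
The plan is to adapt the KRS construction used for Theorem~\ref{upper}, now assigning weights so as to control \emph{both} the extreme ratios simultaneously. Since $\dim_L E>0$, the Remark after Lemma~\ref{proportional} tells us that for small enough $s$ every KRS$(s,c,3c)$ construction has proportionality constant $\zeta=1$; more precisely, Lemma~\ref{Lem:s} with $\varepsilon$ small gives a two-sided bound $s^{-(\dim_L E-\varepsilon)}\le N_w\le s^{-(\dim_A E+\varepsilon)}$ for all deep enough $w$, so the number of children of every interval is comparable to a power of $s^{-1}$, bounded away from $1$ and from $\infty$. Fix $s=s(\varepsilon)$ accordingly, with $\varepsilon$ chosen so that $D-\dim_A E$ and $\dim_L E-d$ both exceed, say, $4\varepsilon$.

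First I would treat the ``bulk'' of the construction: to every interior child of every deep interval I assign a weight that is, up to constants, $s^{\delta}$ for a fixed exponent $\delta$ with $d<\delta<D$ — for instance $\delta$ equal to (a value slightly larger than) $\dim_H$-type middle exponent; because the number of children is between $s^{-(\dim_L E-\varepsilon)}$ and $s^{-(\dim_A E+\varepsilon)}$, such equal weights can be made to sum to something bounded above and below by absolute constants, and then normalized. With only this rule the measure would be roughly $s$-regular of exponent $\delta$, giving $\dim_A\mu$ and $\dim_L\mu$ both close to $\delta$ — not what we want. So next I perturb along sparse, deep paths, exactly as in Lemma~\ref{longbdy} and the end of Theorem~\ref{upper}: choose disjoint families of paths $\{P_j^{A}\}$ and $\{P_j^{L}\}$, beginning at levels $n_j\to\infty$ and pairwise level-disjoint. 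Along $P_j^{A}$ I put weight $a=s^{D}$ on the chosen child at each level and redistribute the remaining mass $1-a$ equally among its (interior) siblings, whose weights thereby stay $\ge$ a constant times $s^{\delta}$, in particular $\ge a$; this forces, for the distinguished endpoints, $\mu(B(x_j,R_j))/\mu(B(x_j,r_j))\ge s^{-D L_j}\gtrsim (R_j/r_j)^{D}$, pushing $\dim_A\mu\ge D$. Symmetrically, along $P_j^{L}$ I put the \emph{maximal} available weight (close to $1$) on the chosen child and the minimal positive weight, of size $\approx s^{D'}$ for some large $D'$, on its siblings, arranging that along such a path $\mu(B(x_j,R_j))/\mu(B(x_j,r_j))\le s^{-d L_j}\lesssim (R_j/r_j)^{d}$, which forces $\dim_L\mu\le d$.

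It then remains to prove the two-sided \emph{global} estimates: for all $x\in E$ and $r\le R\le\rho$,
\begin{equation*}
C'\Bigl(\tfrac{R}{r}\Bigr)^{D(1+\varepsilon)}\ \ge\ \frac{\mu(B(x,R))}{\mu(B(x,r))}\ \ge\ C''\Bigl(\tfrac{R}{r}\Bigr)^{d(1-\varepsilon)}.
\end{equation*}
The argument mirrors the proof of Lemma~\ref{longbdy}: pick levels $j<k$ with $(c/2)s^{j+1}\le R<(c/2)s^{j}$ and $2Cs^{k}\le r<2Cs^{k-1}$, and intervals $I_w\subseteq I_v$ with $x\in I_w$, $|w|=k$, $|v|=j$, so that $I_w\subseteq B(x,r)$ and $B(x,R)$ meets only $I_v$ and at most one adjacent $I_\sigma$. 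Because $\zeta=1$ and boundary paths have bounded length $\le M+1$, the measures of adjacent intervals are comparable (with constant depending on $a^{-(M+1)}$), so $\mu(B(x,R))\le (1+a^{-(M+1)})\mu(I_v)$. For the lower bound on $\mu(B(x,r))$ one chains the weights along the path from $I_v$ to $I_w$: on a level in one of the special paths $P_j^{L}$ a weight may be as small as $s^{D'}$, but such levels, summed over all $j$, are sparse among the $k-j$ steps once $j\ge n_J$ with $2^{-J}<\varepsilon$, while every ordinary level contributes a weight $\gtrsim s^{\delta}$ and every level in a $P_j^{A}$ contributes $\ge a=s^{D}$; hence $\mu(I_w)\ge (\text{const})\,s^{D(1+\varepsilon)(k-j)}\mu(I_v)$, giving the upper estimate. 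Symmetrically, for the lower estimate one bounds $\mu(B(x,r))$ above by the measure of the one or two level-$k$ intervals it can meet and bounds the chained weight from $I_v$ to $I_w$ below using that ordinary and $P_j^{A}$ levels contribute $\lesssim s^{\delta}$ resp.\ $\lesssim 1$, while the $P_j^{L}$ levels (sparse) contribute $\le 1$; one gets $\mu(I_w)\le (\text{const})\, s^{d(1-\varepsilon)(k-j)}\mu(I_v)$. The short-path case $k-j<n_J$ is handled by a bounded-factor estimate as in Theorem~\ref{upper}.

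The main obstacle is the bookkeeping that keeps the two perturbations from interfering: the $P_j^{A}$ paths make some weights atypically \emph{small} and the $P_j^{L}$ paths make some weights atypically \emph{large}, and I must ensure (a) the two families of paths, together with all their siblings, occupy disjoint sets of levels so the weight assignment is well defined, and (b) along \emph{any} ball-determined path the proportion of ``special'' levels tends to $0$, so that neither special family corrupts the global exponent contributed by the bulk. Point (b) is where $\zeta=1$ is essential — it guarantees nearly every level of a long path genuinely splits, so the bulk weight $s^{\delta}$ really does govern the typical behaviour, and the sparsity of the $n_j$ (chosen with $n_{j+1}$ far beyond the end of $P_j$) makes the special contribution a vanishing fraction. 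Once this is in place, letting $\varepsilon\to0$ through a sequence of such constructions — or simply noting the estimates hold for every $\varepsilon>0$ with the same $\mu$ if we fix $\delta$ strictly between $d$ and $D$ and make $s$ small — yields $\dim_A\mu=D$ and $\dim_L\mu=d$ exactly.
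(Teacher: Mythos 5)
Your overall strategy (KRS construction, special deep paths witnessing the extreme ratios, chaining of weights for the global two-sided estimate) is the same as the paper's, but there is a genuine error in the weight assignment that controls the lower Assouad dimension. Along the paths $P_j^{L}$ you put weight ``close to $1$'' on the chosen child at each level. Since $L_j\to\infty$, these paths are nested chains of unbounded length along which $\mu(I_{w}^{(j)})$ is comparable to $\mu(I_{v}^{(j)})$; taking the corresponding balls gives $\mu(B(x_j,R_j))/\mu(B(x_j,r_j))=O(1)$ while $R_j/r_j\to\infty$. This does prove $\dim_L\mu\le d$, but it simultaneously destroys the matching lower bound: no estimate of the form $\mu(B(x,R))/\mu(B(x,r))\ge C''(R/r)^{d(1-\varepsilon)}$ can hold along these paths, so your construction actually yields $\dim_L\mu=0$, not $d$. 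Sparsity of the starting levels $n_j$ does not help here, because the obstruction lives entirely inside a single path $P_j^{L}$, whose length is unbounded. The fix is what the paper does: the path intervals meant to witness the lower dimension must carry weight exactly $s^{d}$ (not close to $1$), and \emph{all} weights in the construction must be confined to the interval $[s^{D},s^{d}]$ --- the hypotheses $0<d<\dim_L E$ and $\dim_A E<D$ together with the two-sided bound on $N_w$ from Lemma \ref{Lem:s} make this possible after choosing $s$ small. Once every weight lies in $[s^{D},s^{d}]$, the global two-sided estimate with exponents exactly $D$ and $d$ follows from chaining with no sparsity or proportionality argument at all, and long constant-weight stretches at $s^{d}$ and $s^{D}$ (the paper uses one infinite interior path alternating between the two) give the reverse inequalities.

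A secondary weakness: even setting the above aside, your siblings of $P_j^{L}$ intervals carry weight $\approx s^{D'}$ with $D'>D$, which forces you into the $\varepsilon$-dependent exponents $D(1+\varepsilon)$ and $d(1-\varepsilon)$ in the global estimate, and your closing remark about ``letting $\varepsilon\to0$ through a sequence of such constructions'' does not produce a single measure; you would need the same measure to satisfy the estimate for every $\varepsilon$, which requires the kind of $2^{-j}$ bookkeeping used in the proof of Theorem \ref{upper}. The paper's choice of weights avoids this entirely because the global bounds come out with the exact exponents $D$ and $d$.
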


\begin{proof}
Fix $0<\varepsilon <\min (\dim _{L}E-d,D-\dim _{A}E)/2$. Choose $s$%
\thinspace $>0$ so small that $s^{-\varepsilon }-2s^{d}\geq 1$, $%
s^{\varepsilon }+s^{d}<1$, $s^{-(d+\varepsilon )}\geq 3$ and $s\leq
s(\varepsilon )$ from Lemma \ref{Lem:s}. Consider a KRS($s,c,3c)$
construction of $E$ with this choice of $s$. According to Lemma \ref{Lem:s},
for all long enough words $w,$ 
\begin{equation}
s^{-(D-\varepsilon )}\geq N_{w}\geq s^{-(d+\varepsilon )}\geq 3.  \label{Nw}
\end{equation}%
Thus every (deep enough) interval from the KRS\ construction splits and if
we put $p=s^{D-\varepsilon }$, then $s^{D}\leq p\leq 1/N_{w}$ for all such $%
w $.

Now choose a rapidly increasing sequence $\{n_{j}\},$ with $n_{1}$
sufficiently large that (\ref{Nw}) holds for $\left\vert w\right\vert \geq
n_{1}$. Select an infinite path consisting of interior intervals.

For even values of $j,$ we will assign weight $s^{d}$ to the interior
intervals of the path at levels $n_{j},...,n_{j+1}-1$. The siblings of each
of these intervals will be assigned equal weights so the sum of the weights
over all the children of a given parent is $1$. We will denote these weights
by $z_{w}$ if the interior interval is $I_{w}$.

For $j$ odd, we do the same thing, but with weight $s^{D}$ assigned to the
interior intervals of the path and denote the weights assigned to siblings
of $I_{w}$ as $y_{w}$.

All boundary intervals from the KRS construction that are not siblings of
the intervals in the path will be assigned weight $p.$ We will assign equal
weights to the siblings of these boundary intervals or to sibling groups
which have no sibling boundary interval so that the sum of the weights of
any sibling group is $1$. Call these weights $x_{w}$ (where $I_{w}$ is any
member of the sibling group). Note that $x_{w}\geq p\geq s^{D}$ as $p\leq
1/N_{w}$ and $x_{w}\leq 1/(N_{w}-\eta _{w})$ where $\eta _{w}=$\# boundary
siblings of $I_{w}$. Since $N_{w}\geq s^{-(d+\varepsilon )}$ and $%
s^{-\varepsilon }-2s^{d}\geq 1,$ it is easy to see that $x_{w}\leq s^{d}$.

One can similarly use the bounds on $N_{w}$ and the relations $%
s^{-\varepsilon }-2s^{d}\geq 1$ and $s^{\varepsilon }+s^{d}<1$ to see that $%
s^{D}\leq y_{w},z_{w}\leq s^{d}$. For example, 
\begin{equation*}
z_{w}=\frac{1-s^{d}}{N_{w}-1}\geq \frac{1-s^{d}}{s^{-(D-\varepsilon )}-1}%
\geq s^{D}
\end{equation*}%
since $1+s^{D}\geq s^{\varepsilon }+s^{d}$. Thus $s^{D}$ is the minimim
weight assigned and $s^{d}$ the maximum.

We also observe that a boundary interval will never have a child that is
assigned weight $s^{d}$ or $s^{D},$ hence all boundary intervals that are
children of boundary intervals are assigned the weight $p$.

Suppose $I_{w}$ and $I_{v}$ are adjacent intervals at a given level. If they
are siblings, their $\mu $-measures are obviously comparable. If they are
not siblings, the paths from their nearest common ancestor to $I_{w}$ and $%
I_{v}$ consists of only boundary intervals after the first descendent.
Consequently, the intervals in these paths all have weight $p$ except
possibly the first two generations, hence the $\mu $-measures of adjacent
intervals are comparable.

Since there are arbitrarily long paths with weights $s^{d}$ and $s^{D}$
respectively, standard arguments now show that $\dim _{L}\mu =d$ and $\dim
_{A}\mu =D$.
\end{proof}

\begin{remark}
We note that it is easy to create a measure with given support $E$ and lower
Assouad dimension $0$. Just take any measure $\mu $ with support $E$ and
suppose $e\in E$. Then $dim_{L}(\mu +\delta _{e})=0$. Observe that if $e$ is
not an isolated point in $E,$ then $\dim _{A}(\mu +\delta _{e})=\infty $.
\end{remark}

%\bibitem{} 
%\bibitem{A1} P. Assouad, U.E..R. Math\'{e}matique, Universit\'{e} Paris XI,
%Orsay. Th{\`{e}}se de doctorat d'\'{E}tat, \emph{Publications Math{\'{e}}matiques d'Orsay,} No. 223-7769, 1977.

%\bibitem{A2} P. Assouad. \'{E}tude d'une dimension m\'{e}trique li\'{e}e 
%\`{a} la possibilit\'{e} de plongements dans $\mathbb{R}^{n}$, \emph{C. R.
%Acad. Sci. Paris S\'{e}r. A-B,} \textbf{288}(15):A731--A734, 1979.


\begin{thebibliography}{99}
\bibitem{BG} P. Bylund and J. Gudayol, On the existence of doubling measures
with certain regularity properties, \emph{Proc. Amer. Math. Soc}., \textbf{%
128}(2000), 3317-3327.

%\bibitem{Fal} K. Falconer. Techniques in fractal geometry, \emph{John Wiley
%\& Sons Ltd., Chichester,} 1997.

\bibitem{FTrans} J.M. Fraser, Assouad type dimensions and homogeneity of
fractals, \emph{Trans. Amer. Math. Soc.,} \textbf{366}(2014), 6687--6733.

\bibitem{FH} J.M. Fraser and D. Howroyd, On the upper regularity dimension
of measures, \emph{Indiana Univ. Math. J.,} \emph{to appear}.\emph{\ arXiv}:
1706.09340.

\bibitem{HHT} K.E. Hare, K.G. Hare and S. Troscheit, Quasi-doubling of
self-similar measures with overlaps, \emph{J. Fractal Geometry}, \emph{to
appear}.\emph{\ arXiv}: 1807.09191.

\bibitem{HT18} K.E. Hare and S. Troscheit, Lower Assouad dimension of
measures and regularity, \emph{\ arXiv}: 1812.05573.

\bibitem{Jo} A. Jonsson, Besov spaces on closed subsets of $\mathbb{R}^{n}$, 
\emph{Trans. Amer. Math. Soc.}, \textbf{41}(1994), 355-370.

\bibitem{KL} A. K\"{a}enm\"{a}ki and J. Lehrb\"{a}ck, Measures with
predetermined regularity and inhomogeneous self-similar sets, \emph{Ark. Mat}%
., \textbf{55}(2017), 165--184.

\bibitem{KLV} A. K{\"{a}}enm{\"{a}}ki, J. Lehrb{\"{a}}ck and M. Vuorinen,
Dimensions, Whitney covers, and tubular neighborhoods, \emph{\ Indiana Univ.
Math. J.,} \textbf{62}(2013), 1861--1889.

\bibitem{KRS} A. K\"{a}enm\"{a}ki, T. Rajala and V. Suomala, Existence of
doubling measures via generalised nested cubes, \emph{Proc. Amer. Math. Soc.}%
, \textbf{140}(2012), 3275-3281.

%\bibitem{KR} A. K\"{a}enm\"{a}ki and E. Rossi, Weak separation condition,
%Assouad dimension, and Furstenberg homogeneity. \emph{Ann. Acad. Sci. Fenn. Math}., \textbf{41}:465-490, 2016.

%\bibitem{Luu} J. Luukkainen. Assouad dimension: antifractal metrization,
%porous sets, and homogeneous measures, \emph{J. Korean Math Soc.,} \textbf{35%}(1):23--76, 1998.

\bibitem{LS} J. Luukkainen and E. Saksman, Every complete doubling metric
space carries a doubling measure, \emph{Proc. Amer. Math. Soc}., \textbf{126}%
(1988), 531-534.

\bibitem{VK} A. L. Volberg and S. V. Konyagin, On measures with the doubling
condition, \emph{Math USSR\ Izv.,} \textbf{30}(1988), 629-638.
\end{thebibliography}
\end{document}